\numberwithin{equation}{section}
\newcommand{\lang}{\left \langle}
\newcommand{\rang}{\right \rangle}
\newcommand{\myR}{\mathcal{R}}
\renewcommand{\div}{\operatorname{div}}
\newcommand{\half}{\frac{1}{2}}
\newtheorem{theorem}{Theorem}[section]
\newtheorem{lemma}{Lemma}[section]
\newtheorem{corollary}{Corollary}[section]
\begin{document}
\title{A Parameter-Robust Iterative Method for Stokes-Darcy Problems Retaining Local Mass Conservation}
\thanks{We thank the Deutsche Forschungsgemeinschaft (DFG, German Research Foundation) for supporting this work by funding SFB 1313, Project Number 327154368.}
\author{Wietse M. Boon}
\address{University of Stuttgart, 
Department of Hydromechanics and Modelling of Hydrosystems, 
Pfaffenwaldring 61, 70569, Stuttgart \\
Currently at: KTH Royal Institute of Technology, 
Department of Mathematics,
Lindstedtsv\"agen 25, 114 28, Stockholm, \email{wietse@kth.se}}
\date{\today}
\begin{abstract}
	We consider a coupled model of free-flow and porous medium flow, governed by stationary Stokes and Darcy flow, respectively. The coupling between the two systems is enforced by introducing a single variable representing the normal flux across the interface. The problem is reduced to a system concerning only the interface flux variable, which is shown to be well-posed in appropriately weighted norms. An iterative solution scheme is then proposed to solve the reduced problem such that mass is conserved at each iteration. By introducing a preconditioner based on the weighted norms from the analysis, the performance of the iterative scheme is shown to be robust with respect to material and discretization parameters. By construction, the scheme is applicable to a wide range of locally conservative discretization schemes and we consider explicit examples in the framework of Mixed Finite Element methods. Finally, the theoretical results are confirmed with the use of numerical experiments.
\end{abstract}
%
%
\subjclass{65N12, 65N55, 76D07, 76S05}
\keywords{Coupled porous media and fluid flow, Mixed Finite Element method, Mortar method, Robust Preconditioner}

\maketitle

\section{Introduction} 
\label{sec:introduction}

The coupled interaction between free flow and porous medium flow forms an active area of research due to their appearance in a wide variety of applications. Examples include biomedical applications such as blood filtration, engineering situations such as air filters and PEM fuel cells, as well as environmental considerations such as the drying of soils. In all mentioned applications, it is essential to properly capture the mutual interaction between a free, possibly turbulent, flow and the creeping flow inside the porous medium. 

We consider the simplified setting in which the free-flow regime is described by Stokes flow and we let Darcy's law govern the porous medium flow. Moreover, we only consider the case of stationary, single-phase flow and assume a sharp interface between the two flow regimes. These considerations are simplifications of a more general framework of models coupling free flow with porous medium flow. Such models have been the topic of a variety of scientific work in recent years, with focuses including mathematical analysis, discretization methods, and iterative solution techniques. Different formulations of the Stokes-Darcy problem have been analyzed in \cite{discacciati2004domain,layton2002coupling,gatica2011analysis}. Examples in the context of discretization methods include the use of Finite Volume methods \cite{iliev2004a, mosthaf2011a, rybak2015a, masson2016a, fetzer2017a, schneider2020coupling} and (Mixed) Finite Element methods, both in a coupled \cite{layton2002coupling,discacciati2009navier,riviere2005locally,gatica2009conforming} and in a unified \cite{armentano2019unified,karper2009unified} setting. Moreover, iterative methods for this problem are considered in e.g. \cite{discacciati2007robin,discacciati2005iterative,discacciati2018optimized,ganderderivation,galvis2007balancing,cao2011robin}. We refer the reader to the works \cite{discacciati2009navier,rybak2016mathematical,discacciati2004domain} and references therein for more comprehensive overviews on the results concerning the Stokes-Darcy model.

In order to distinguish this work from existing results, we formulate the following objective: \\
The goal of this work is to create an iterative numerical method that solves the stationary, Stokes-Darcy problem with the following three properties:
\begin{enumerate}
	\item \label{goal: mass conservation}
	The solution retains \textbf{local mass conservation}, after each iteration. 
	Since mass balance is a physical conservation law, we emphasize its importance over all other constitutive relationships in the model. Hence, the first aim is to produce a solution that respects local mass conservation and use iterations to improve the accuracy of the solution with respect to the remaining equations. Importantly, we aim to obtain a conservative flow field in the case that the iterative scheme is terminated before reaching convergence.

	We present two main ideas to achieve this. First, we limit ourselves to discretization methods capable of ensuring local mass conservation within each flow regime. Secondly, we ensure that no mass is lost across the Stokes-Darcy interface by introducing a single variable describing this interfacial flux. Our contribution in this context is to pose and analyze the Stokes-Darcy problem using function spaces that ensure normal flux continuity (Section~\ref{sub:functional_setting}), both in the continuous (Section~\ref{sec:well_posedness}) and discretized (Section~\ref{sec:discretization}) settings. Our approach is closely related to the ``global'' approach suggested in \cite[Remark 2.3.2]{discacciati2004domain} which we further develop in a functional setting. We moreover note that our construction is, in a sense, dual to the more conventional approach in which a mortar variable representing the interface pressure is introduced, see e.g. \cite{layton2002coupling,gatica2009conforming}.
	
	\item
	The performance of the iterative solution scheme is \textbf{robust with respect to physical and mesh parameters}. In this respect, the first aim is to obtain sufficient accuracy of the solution within a given number of iterations that is robust with respect to given material parameters such as the permeability of the porous medium and the viscosity of the fluid. This will allow the scheme to handle wide ranges of material parameters that arise either from the physical problem or due to the chosen non-dimensionalization.

	Robustness with respect to mesh size is advantageous from a computational perspective. If the scheme reaches sufficient accuracy within a number of iterations on a coarse grid, then this robustness provides a prediction on the necessary computational time on refined grids. We note that the analysis in this work is restricted to shape-regular meshes, hence the typical mesh size $h$ becomes the only relevant mesh parameter.

	To attain this goal, we pay special attention to the influence of the material and mesh parameters in the a priori analysis of the problem. We derive stability bounds of the solution in terms of functional norms weighted with the material parameters. One of the main contributions is thus the derivation of a properly weighted norm for the normal flux on the Stokes-Darcy interface, presented in equation \eqref{eq: norm phi}. In turn, this norm is used to construct an optimal preconditioner a priori.
	
	\item
	The method is easily extendable to a \textbf{wide range of discretization methods} for the Stokes and Darcy subproblems. Aside from compliance with aim (1), we impose as few restrictions as possible on the underlying choice of discretization methods, thus allowing the presented iterative scheme to be highly adaptable. Moreover, the scheme is able to benefit from existing numerical implementations that are tailored to solving the Stokes and Darcy subproblems efficiently. This work employs a conforming Mixed Finite Element method, keeping in mind that extensions can readily be made to other locally conservative methods such as e.g. Finite Volume Methods or Discontinuous Galerkin methods.

	In order to achieve this third goal, we first derive the properties of the problem in the continuous setting and apply the discretization afterward. The key strategy here is to reformulate the problem into a Steklov-Poincar\'e system concerning only the normal flux across the interface, similar to the strategy presented in \cite[Sec. 2.5]{discacciati2004domain}. We then propose a preconditioner for this problem that is independent of the chosen discretization methods for the subproblems.
\end{enumerate}

Our formulation and analysis of the Stokes-Darcy problem therefore has three distinguishing properties. Most importantly, we consider a mixed formulation of the coupled problem using a function space that strongly imposes normal flux continuity at the interface. In contrast, existing approaches often use a primal formulation for the Darcy subproblem \cite{discacciati2004domain} or enforce flux continuity using Lagrange multipliers \cite{layton2002coupling}. In the context of Mixed Finite Element Methods, this directly leads to different choices of discrete spaces. Secondly, our analysis employs weighted norms and we derive an estimate for the interface flux that has, to our knowledge, not been exploited in existing literature. Third, we propose a preconditioner in Section~\ref{sub:parameter_robust_preconditioning} that is entirely local to the interface and does not require additional subproblem solves, in contrast to more conventional approaches such as the Neumann-Neumann method presented in Section~\ref{sub:comparison_to_NN_method}. The construction of this preconditioner does, however, require solving a generalized eigenvalue problem, which is done in the a priori, or ``off-line'', stage. As an additional feature, our set-up does not require choosing any acceleration parameters.

The article is structured as follows. Section~\ref{sec:the_model} introduces the coupled Stokes-Darcy model and its variational formulation as well as the notational conventions used throughout this work. Well-posedness of the model is shown in Section~\ref{sec:well_posedness} with the use of weighted norms. Section~\ref{sec:the_steklov_poincare_system} shows the reduction to an interface problem concerning only the normal flux defined there. A conforming discretization is proposed in Section~\ref{sec:discretization} with the use of the Mixed Finite Element method. Using the ingredients of these sections, Section~\ref{sec:iterative_solvers} describes the proposed iterative scheme and the optimal preconditioner it relies on. The theoretical results are confirmed numerically in Section~\ref{sec:numerical_results}. Finally, Section~\ref{sec:conclusions} contains concluding remarks.

\section{The Coupled Stokes-Darcy Model} 
\label{sec:the_model}

Consider an open, bounded domain $\Omega \subset \mathbb{R}^n$, $n \in  \{2, 3\}$, decomposed into two disjoint, Lipschitz subdomains $\Omega_S$ and $\Omega_D$. Here, and throughout this work, the subscript $S$ or $D$ is used on subdomains and variables to denote its association to the Stokes or Darcy subproblem, respectively. Let the interface be denoted by $\Gamma := \partial{\Omega}_S \cap \partial{\Omega}_D$ and let $\bm{n}$ denote the unit vector normal to $\Gamma$ oriented outward with respect to $\Omega_S$. An illustration of these definitions is given in Figure~\ref{fig:figure1}.

\begin{figure}[ht]
	\centering
	\includegraphics[width = \textwidth]{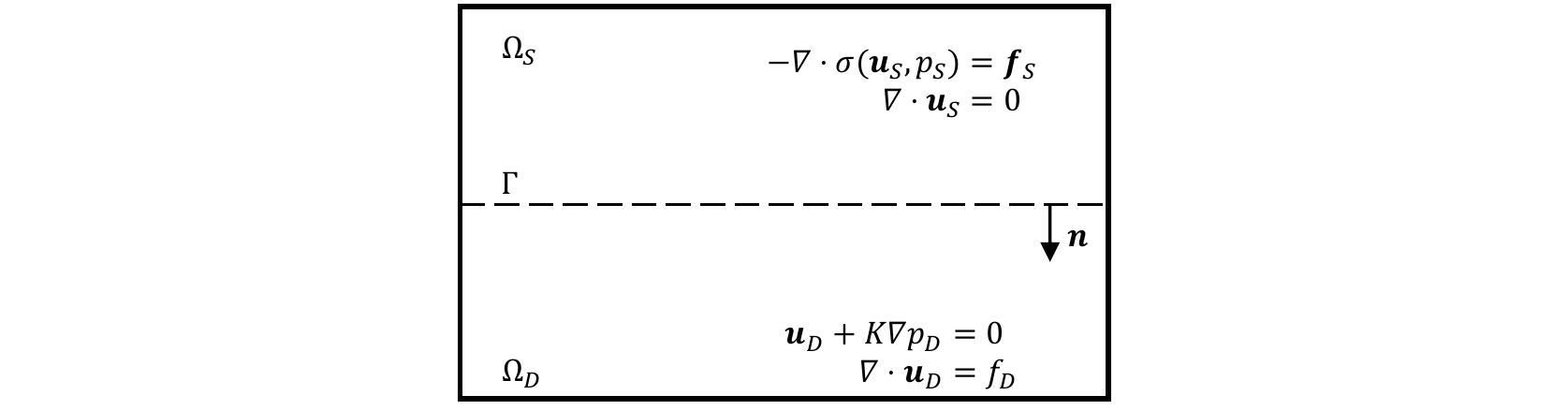}
	\caption{Decomposition of the domain into $\Omega_S$ and $\Omega_D$.}
	\label{fig:figure1}
\end{figure}

We introduce the model problem following the description of \cite{layton2002coupling}. The main variables are given by the velocity $\bm{u}$ and pressure $p$. A subscript denotes the restriction of a variable to the corresponding subdomain. The model is formed by considering Stokes flow for $(\bm{u}_S, p_S)$ in $\Omega_S$, Darcy flow for $(\bm{u}_D, p_D)$ in $\Omega_D$, and mass conservation laws for $\bm{u}$ in both subdomains: 
\begin{subequations} \label{eq: SD strong form}
	\begin{align}
		-\nabla \cdot \sigma(\bm{u}_S, p_S) &= \bm{f}_S, & 
		\nabla \cdot \bm{u}_S &= 0, & \text{in }\Omega_S, \\
		\bm{u}_D + K \nabla p_D &= 0, & 
		\nabla \cdot \bm{u}_D &= f_D, & \text{in }\Omega_D.
	\end{align}
In this setting, $K$ is the hydraulic conductivity of the porous medium. For simplicity, we assume that $K$ is homogeneous and isotropic and thus given by a positive scalar. On the right-hand side, $\bm{f}_S$ represents a body force and $f_D$ corresponds to a mass source.
In the governing equations for Stokes flow, let the strain $\varepsilon$ and stress $\sigma$ be given by
\begin{align*}
	\varepsilon(\bm{u}_S) &:= \frac{1}{2}\left(\nabla \bm{u}_S + (\nabla \bm{u}_S)^T \right), &
	\sigma(\bm{u}_S, p_S) &:= \mu \varepsilon(\bm{u}_S) - p_S I.
\end{align*}
The parameter $\mu > 0$ is the viscosity. 

Next, we introduce two coupling conditions on the interface $\Gamma$ that describe mass conservation and the balance of forces, respectively:
	\begin{align}
		\bm{n} \cdot \bm{u}_S &= \bm{n} \cdot \bm{u}_D, 
		& \text{on } \Gamma, \label{eq: coupling_mass}\\
		\bm{n} \cdot \sigma(\bm{u}_S, p_S) \cdot \bm{n} &= -p_D,
		& \text{on } \Gamma. 
	\end{align}
As remarked in the introduction, we keep a particular focus on conservation of mass. To ensure that no mass is lost across the interface, we will prioritize condition \eqref{eq: coupling_mass} at a later stage.

To close the model, we consider the following boundary conditions. First, for the Stokes subproblem, we impose the Beavers-Joseph-Saffman condition on the interface $\Gamma$, given by
\begin{align}
		\bm{n} \cdot \sigma(\bm{u}_S, p_S) \cdot \bm{\tau}
		&= - \beta \bm{\tau} \cdot \bm{u}_S, 
		& \text{on } \Gamma. 
		\label{eq: BJS}
\end{align}
Here, we define $\beta := \alpha \frac{\mu}{\sqrt{\bm{\tau} \cdot \kappa \cdot \bm{\tau}}} $ with $\kappa := \mu K$ the permeability and $\alpha$ a proportionality constant to be determined experimentally. Moreover, the unit vector $\bm{\tau}$ is obtained from the tangent bundle of $\Gamma$. Thus, for $n = 2$, equation \eqref{eq: BJS} corresponds to a single condition on the one-dimensional interface $\Gamma$ whereas for $n = 3$, it describes two separate coupling conditions.

The boundary of $\Omega$ is decomposed in the disjoint unions $\partial \Omega_S \setminus \Gamma = \partial_u \Omega_S \cup \partial_\sigma \Omega_S$ and $\partial \Omega_D \setminus \Gamma = \partial_u \Omega_D \cup \partial_p \Omega_D$. The subscript denotes the type of boundary condition imposed on that portion of the boundary. Specifically, we set
	\begin{align}
		\bm{u}_S &= 0, & \text{on } &\partial_u \Omega_S, & 
		\bm{n} \cdot \bm{u}_D &= 0, & \text{on } &\partial_u \Omega_D. \label{eq: BC essential} \\ 
		\bm{n} \cdot \sigma(\bm{u}_S, p_S) &= 0, & \text{on } &\partial_\sigma \Omega_S, &
		p_D &= g_p, & \text{on } &\partial_p \Omega_D, \label{eq: BC natural} 
	\end{align}
with $g_p$ a given pressure distribution.
\end{subequations}

In the following, we assume that the interface $\Gamma$ touches the portion of the boundary $\partial \Omega_S$ where homogeneous flux conditions are imposed, i.e. $\partial \Gamma \subseteq \overline{\partial_u \Omega_S}$. 
We note that this assumption excludes the case in which $\Omega_D$ is completely surrounded by $\Omega_S$.
Moreover, we assume that $|\partial_\sigma \Omega_S \cup \partial_p \Omega_D| > 0$ to ensure unique solvability of the coupled problem and we focus on the case in which $|\partial_\sigma \Omega_S| > 0$.

\subsection{Functional Setting} 
\label{sub:functional_setting}

In this section, we introduce the function spaces in which we search for a weak solution of problem \eqref{eq: SD strong form}. We start by considering the space for the velocity variable $\bm{u}$. With the aim of deriving mixed formulations for both subproblems, we introduce the following spaces:
\begin{subequations}
	\begin{align}
		\bm{V}_S &:= \left\{ \bm{v}_S \in (H^1(\Omega_S))^n :\ 
		\bm{v}_S|_{\partial_u \Omega_S} = 0 \right\}, \\
		\bm{V}_D &:= \left\{ \bm{v}_D \in H(\div, \Omega_D) :\ 
		\bm{n} \cdot \bm{v}_D|_{\partial_u \Omega_D} = 0 \right\}.
	\end{align}
\end{subequations}

Note that these spaces incorporate the boundary conditions \eqref{eq: BC essential} on $\partial_u \Omega$ which become essential boundary conditions in our mixed formulation. Similarly, the normal flux continuity across $\Gamma$ \eqref{eq: coupling_mass} needs to be incorporated as an essential boundary condition. For that, we introduce a single function $\phi \in \Lambda$, defined on $\Gamma$ to represent the normal flux across the interface. The next step is then to define the following three function spaces:
\begin{subequations}
	\begin{align}
		\bm{V}_S^0 &:= \left\{ \bm{v}_S \in \bm{V}_S :\ 
		\bm{n} \cdot \bm{v}_S|_{\Gamma} = 0 \right\}, \\
		\Lambda &:= H^{1/2}_{00}(\Gamma), \\
		\bm{V}_D^0 &:= \left\{ \bm{v}_D \in \bm{V}_D :\ 
		\bm{n} \cdot \bm{v}_D|_{\Gamma} = 0 \right\}.
	\end{align}
\end{subequations}
We note that $\Lambda$ is the normal trace space of $\bm{V}_S$ on $\Gamma$. From the previous section, we recall that $\Gamma$ touches the boundary $\partial \Omega$ where zero velocity conditions are imposed for the Stokes problem. The trace space is therefore characterized as the fractional Sobolev space $H^{1/2}_{00}(\Gamma)$, containing distributions that can be continuously extended by zero on $\partial \Omega$. We refer the reader to \cite{lions2012non} for more details on this type of trace spaces. For the purpose of our analysis, we note that the inclusion $H_0^1(\Gamma) \subset \Lambda \subset L^2(\Gamma)$ holds and we let $H^{-\half}(\Gamma)$ denote the dual of $\Lambda$.

For the incorporation of the interface condition \eqref{eq: coupling_mass} in our weak formulation, we introduce continuous operators that extend a given flux distribution on the interface to the two subdomains. The extension operators $\myR_S: \Lambda \to \bm{V}_S$ and $\myR_D: \Lambda \to \bm{V}_D$ are chosen such that
\begin{align} \label{eq: extension property}
	(\bm{n} \cdot \myR_S \varphi)|_{\Gamma} 
	&= 
	(\bm{n} \cdot \myR_D \varphi)|_{\Gamma} 
	= \varphi.
\end{align}
We use $\| \cdot \|_{s, \Omega}$ as short-hand notation for the norm on $H^s(\Omega)$. With this notation, the continuity of $\myR_i$ implies that the following inequalities hold
\begin{align} \label{eq: continuity R_i}
	\| \myR_S \varphi \|_{1, \Omega_S} &\lesssim \| \varphi \|_{\half, \Gamma}, &
	\| \myR_D \varphi \|_{0, \Omega_D} + \| \nabla \cdot \myR_D \varphi \|_{0, \Omega_D} &\lesssim \| \varphi \|_{-\half, \Gamma}.
\end{align}
Examples of continuous extension operators can be found in \cite[Sec. 4.1.2]{quarteroni1999domain}. The notation $A \lesssim B$ implies that a constant $c > 0$ exists, independent of material parameters and the mesh size $h$ such that $A \le cB$. The relationship $\gtrsim$ is defined analogously.

These definitions allow us to create a function space $\bm{V}$ containing velocities with normal trace continuity on $\Gamma$. Let this function space be defined as
\begin{align}
	\bm{V} &:= \left\{ \bm{v} \in (L^2(\Omega))^n :\ 
	\exists (\bm{v}_S^0, \varphi, \bm{v}_D^0) \in \bm{V}_S^0 \times \Lambda \times \bm{V}_D^0
	\text{ such that } \bm{v}|_{\Omega_i} = \bm{v}_i^0 + \myR_i \varphi, \text{ for } i \in \{S, D\} \right\}.
\end{align}

Second, the function space for the pressure variable is given by $W := L^2(\Omega)$ and we define $W_S := L^2(\Omega_S)$ and $W_D := L^2(\Omega_D)$.

As before, we use the subscript $i \in \{S, D\}$ to denote the restriction to a subdomain $\Omega_i$. Thus, for $(\bm{v}, w) \in \bm{V} \times W$, we have
\begin{align}
	\bm{v}_i &:= \bm{v}|_{\Omega_i} \in \bm{V}_i, & 
	w_i &:= w|_{\Omega_i} \in W_i, &
	i &\in \{S, D\}.
\end{align}
Despite the fact that each function in $\bm{V}$ can be decomposed into components in $\bm{V}_S$ and $\bm{V}_D$, we emphasize that $\bm{V}$ is a strict subspace of $\bm{V}_S \times \bm{V}_D$ due to the continuity of normal traces on $\Gamma$. 

A key concept in our functional setting is to consider a decomposition of $\bm{V}$ comprising of a function with zero normal trace on $\Gamma$ and an extension of the normal flux distribution. For that purpose, let $\bm{V}^0$ be the subspace of $\bm{V}$ consisting of functions with zero normal trace over $\Gamma$:
\begin{align}
	\bm{V}^0 := \left\{ \bm{v}^0 \in \bm{V} :\ 
		\exists (\bm{v}_S^0, \bm{v}_D^0) \in \bm{V}_S^0 \times \bm{V}_D^0
		\text{ such that } \bm{v}^0|_{\Omega_i} = \bm{v}_i^0, \text{ for } i \in \{S, D\} \right\}.
\end{align}

Secondly, we define the composite extension operator $\myR: \Lambda \to V$ such that $\myR \varphi|_{\Omega_i} = \myR_i \varphi$ for $i \in \{S, D\}$. Combined with the subspace $\bm{V}^0$, we obtain the decomposition
\begin{align} \label{eq: decomposition V}
	\bm{V} = \bm{V}^0 \oplus \myR \Lambda.
\end{align}

It is important to emphasize that the function space $\bm{V}$ is independent of the choice of extension operators. On the other hand, each choice of $\myR$ leads to a specific decomposition of the form \eqref{eq: decomposition V}.

\subsection{Variational Formulation} 
\label{sub:variational_Formulation}

With the function spaces defined, we continue by deriving the variational formulation of \eqref{eq: SD strong form}. The first step is to consider the Stokes and Darcy flow equations. We test these with $\bm{v} \in \bm{V}$ and integrate over the corresponding subdomain. Using $(\cdot, \cdot)_{\Omega}$ to denote the $L^2$ inner product on $\Omega$, we apply integration by parts and use the boundary conditions to derive
\begin{align*}
	-(\nabla \cdot \sigma(\bm{u}_S, p_S), \bm{v}_S)_{\Omega_S} &= \nonumber \\
	(\sigma(\bm{u}_S, p_S), \nabla \bm{v}_S)_{\Omega_S} 
	- ( \bm{n} \cdot \sigma(\bm{u}_S, p_S), \bm{v}_S )_{\Gamma} 
	&= \nonumber\\
	(\mu \varepsilon(\bm{u}_S), \varepsilon(\bm{v}_S))_{\Omega_S} 
	- (p_SI, \nabla \bm{v}_S)_{\Omega_S} 
	+ (\beta \bm{\tau} \cdot \bm{u}_S, \bm{\tau} \cdot \bm{v}_S )_{\Gamma}
	+ ( p_D, \bm{n} \cdot \bm{v}_S )_{\Gamma} 
	&= ( \bm{f}_S, \bm{v}_S )_{\Omega}.
\end{align*}

On the other hand, we test Darcy's law in the porous medium $\Omega_D$ and use similar steps to obtain
\begin{align*}
	(K^{-1} \bm{u}_D, \bm{v}_D)_{\Omega_D} 
	- (p_D, \nabla \cdot \bm{v}_D)_{\Omega_D} 
	- ( p_D, \bm{n} \cdot \bm{v}_D )_{\Gamma} 
	- ( g_p, \bm{n} \cdot \bm{v}_D )_{\partial_p \Omega_D}
	&= 0.
\end{align*}

The normal trace continuity imposed in the space $\bm{V}$ gives us
$( p_D, \bm{n} \cdot \bm{v}_S )_{\Gamma} - ( p_D, \bm{n} \cdot \bm{v}_D )_{\Gamma} = 0$.

In turn, after supplementing the system with the equations for mass conservation, we arrive at the following variational formulation: \\
Find the pair $(\bm{u}, p) \in \bm{V} \times W$ that satisfies
\begin{subequations}
\begin{align}
	(\mu \varepsilon(\bm{u}_S), \varepsilon(\bm{v}_S))_{\Omega_S} 
	+ (\beta \bm{\tau} \cdot \bm{u}_S, \bm{\tau} \cdot \bm{v}_S )_{\Gamma} 
	+ (K^{-1} \bm{u}_D, \bm{v}_D)_{\Omega_D} 
	& \nonumber\\
	- (p_S, \nabla \cdot \bm{v}_S)_{\Omega_S} 
	- (p_D, \nabla \cdot \bm{v}_D)_{\Omega_D} 
	&= ( \bm{f}_S, \bm{v}_S )_{\Omega_S}
	+ ( g_p, \bm{n} \cdot \bm{v}_D )_{\partial_p \Omega_D}, 
	& \forall \bm{v} &\in \bm{V}, \\
	(\nabla \cdot \bm{u}_S, w_S)_{\Omega_S} 
	+ (\nabla \cdot \bm{u}_D, w_D)_{\Omega_D} 
	&= 
	(f_D, w_D)_{\Omega_D},
	& \forall w &\in W.
\end{align}
\end{subequations}

We note that this system has a characteristic saddle-point structure, allowing us to rewrite the problem as:\\
Find the pair $(\bm{u}, p) \in \bm{V} \times W$ that satisfies
\begin{subequations}  \label{eq: variational formulation}
	\begin{align} 
		a(\bm{u}, \bm{v}) + b(\bm{v}, p) &= f_u(\bm{v}),
		& \forall \bm{v} &\in \bm{V}, \label{eq: variational formulation 1st eq}\\
		b(\bm{u}, w) &= f_p(w), \label{eq: variational formulation 2nd eq}
		& \forall w &\in W.
	\end{align}
\end{subequations}

The bilinear forms $a: \bm{V} \times \bm{V} \to \mathbb{R}$ and $b: \bm{V} \times W \to \mathbb{R}$, and the functionals $f_u: \bm{V} \to \mathbb{R}$ and $f_p: W \to \mathbb{R}$ are given by
\begin{subequations} \label{eq: bilinear forms}
	\begin{align}
		a(\bm{u}, \bm{v}) &:= (\mu \varepsilon(\bm{u}_S), \varepsilon(\bm{v}_S))_{\Omega_S} 
		+ (\beta \bm{\tau} \cdot \bm{u}_S, \bm{\tau} \cdot \bm{v}_S )_{\Gamma} 
		+ (K^{-1} \bm{u}_D, \bm{v}_D)_{\Omega_D}, \\
		b(\bm{u}, w) &:= -(\nabla \cdot \bm{u}_S, w_S)_{\Omega_S}
		-(\nabla \cdot \bm{u}_D, w_D)_{\Omega_D}, \\
		f_u(\bm{v}) &:= ( \bm{f}_S, \bm{v}_S )_{\Omega_S} 
		+ ( g_p, \bm{n} \cdot \bm{v}_D )_{\partial_p \Omega_D}, \\
		f_p(w) &:= 
		-(f_D, w_D)_{\Omega_D}.
	\end{align}
\end{subequations}


\section{Well-Posedness Analysis} 
\label{sec:well_posedness}

In this section, we analyze problem \eqref{eq: variational formulation} with the use of weighted norms. The main goal is to show that a unique solution exists that is bounded in norms that depend on the material parameters. Consequently, this result allows us to construct an iterative method that is robust with respect to material parameters. 
We start by deriving the appropriate norms, for which we first make two assumptions on the material parameters. 
	

First, the constant $\beta$ in the Beavers-Joseph-Saffman condition \eqref{eq: BJS} is assumed to be bounded as
\begin{subequations} \label{eqs: material parameter bounds}
\begin{align}
	\beta = \alpha \frac{\mu}{\sqrt{\bm{\tau} \cdot \kappa \cdot \bm{\tau}}} \lesssim \mu.
\end{align}
In the special case of $\alpha = 0$, this condition is trivially satisfied.

Second, we assume that the permeability $\kappa := \mu K$ is bounded from above in the sense that
\begin{align}
	\mu K \lesssim 1.
\end{align}
\end{subequations}

We are now ready to define the weighted norms for $\bm{v} \in \bm{V}$ and $w \in W$, respectively, given by
\begin{subequations} \label{eq: norms}
	\begin{align}
		\| \bm{v} \|_V^2 &:= 
		\| \mu^{\half} \bm{v}_S \|_{1, \Omega_S}^2
		+ \| K^{-\half} \bm{v}_D \|_{0, \Omega_D}^2
		+ \| K^{-\half} \nabla \cdot \bm{v}_D \|_{0, \Omega_D}^2 \\
		\| w \|_W^2 &:= \| \mu^{-\half} w_S \|_{0, \Omega_S}^2
		+ \| K^{\half} w_D \|_{0, \Omega_D}^2.
	\end{align}
\end{subequations}

The next step is to analyze the problem using these norms. For that purpose, we recall the identification of \eqref{eq: variational formulation} as a saddle-point problem. Using saddle-point theory \cite{boffi2013mixed}, well-posedness is shown by proving the four sufficient conditions presented in the following lemma.
\begin{lemma} \label{lem: inequalities}
	The bilinear forms defined in \eqref{eq: bilinear forms} satisfy the following inequalities:
	\begin{subequations}
	\begin{align}
			& \text{For } \bm{u}, \bm{v} \in \bm{V}: 
			&  a(\bm{u}, \bm{v}) &\lesssim \| \bm{u} \|_V \| \bm{v} \|_V. \label{ineq: a_cont}\\
			& \text{For } (\bm{v}, w) \in \bm{V} \times W: 
			&  b(\bm{v}, w) &\lesssim \| \bm{v} \|_V \| w \|_W. \label{ineq: b_cont}\\
			& \text{For } \bm{v} \in \bm{V} \text{ with } b(\bm{v}, w) = 0 \ \forall w \in W: 
			&  a(\bm{v}, \bm{v}) &\gtrsim \| \bm{v} \|_V^2. \label{ineq: a_coercive}\\
			& \text{For } w \in W, \ \exists \bm{v} \in \bm{V} \text{ with } \bm{v} \ne 0, \text{ such that}: 
			&  b(\bm{v}, w) &\gtrsim \| \bm{v} \|_V \| w \|_W.\label{ineq: b_infsup}
		\end{align}
	\end{subequations}
	\end{lemma}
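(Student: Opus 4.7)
My plan is to verify each inequality in turn, treating the first two as weighted Cauchy--Schwarz bookkeeping and the last two as the substantive claims.

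For continuity of $a$, I would split $a(\bm{u}, \bm{v})$ into its three constituent terms. The viscous term $(\mu \varepsilon(\bm{u}_S), \varepsilon(\bm{v}_S))_{\Omega_S}$ and the Darcy term $(K^{-1} \bm{u}_D, \bm{v}_D)_{\Omega_D}$ yield the desired bound immediately after distributing the weights $\mu^{\half}$ and $K^{-\half}$ and applying Cauchy--Schwarz. The Beavers--Joseph--Saffman boundary term is the only one requiring care: I would invoke the parameter assumption $\beta \lesssim \mu$ together with the trace inequality $\| \bm{v}_S \|_{0,\Gamma} \lesssim \| \bm{v}_S \|_{1,\Omega_S}$ to conclude $(\beta \bm{\tau}\cdot \bm{u}_S, \bm{\tau}\cdot \bm{v}_S)_\Gamma \lesssim \| \mu^{\half} \bm{u}_S \|_{1,\Omega_S} \| \mu^{\half} \bm{v}_S \|_{1,\Omega_S}$. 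Continuity of $b$ is handled analogously, pairing $\mu^{\half}\nabla\cdot\bm{v}_S$ with $\mu^{-\half} w_S$ and $K^{-\half}\nabla\cdot\bm{v}_D$ with $K^{\half} w_D$ before applying Cauchy--Schwarz.

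For coercivity of $a$ on the kernel of $b$, I would first observe that since $W = L^2(\Omega)$, the requirement $b(\bm{v}, w) = 0$ for every $w \in W$ forces $\nabla\cdot\bm{v}_S = 0$ and $\nabla\cdot\bm{v}_D = 0$ pointwise a.e.\ (test with $w_S = \nabla\cdot\bm{v}_S$ and $w_D = \nabla\cdot\bm{v}_D$), so the divergence contribution to $\|\bm{v}\|_V^2$ vanishes. Korn's inequality, valid on $\bm{V}_S$ because of the essential boundary condition on $\partial_u\Omega_S$ with positive surface measure, yields $\|\mu^{\half}\bm{v}_S\|_{1,\Omega_S}^2 \lesssim \|\mu^{\half}\varepsilon(\bm{v}_S)\|_{0,\Omega_S}^2$ with a constant independent of $\mu$. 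Combining this with the Darcy contribution $\|K^{-\half}\bm{v}_D\|_{0,\Omega_D}^2$ already present in $a(\bm{v},\bm{v})$ and discarding the nonnegative BJS term produces the desired lower bound.

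The inf-sup condition is the main obstacle. My strategy is to construct a test function $\bm{v} \in \bm{V}^0$, i.e.\ with vanishing normal flux on $\Gamma$, so that the two subdomains decouple and the normal trace continuity needed for membership in $\bm{V}$ is automatic. Given $w \in W$, I would first appeal to the classical Stokes divergence inf-sup on $\Omega_S$ for velocities in $\bm{V}_S^0$ (available because $|\partial_\sigma \Omega_S|>0$ supplies the boundary freedom required for compatibility) to produce $\bm{v}_S \in \bm{V}_S^0$ satisfying $-\nabla\cdot\bm{v}_S = \mu^{-1}w_S$ with $\|\bm{v}_S\|_{1,\Omega_S} \lesssim \|\mu^{-1}w_S\|_{0,\Omega_S}$. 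For the Darcy side, I would solve an auxiliary mixed Poisson problem (homogeneous natural condition on $\Gamma \cup \partial_u\Omega_D$, essential condition on $\partial_p \Omega_D$) to obtain $\bm{v}_D \in \bm{V}_D^0$ with $-\nabla\cdot\bm{v}_D = K w_D$ and $\|\bm{v}_D\|_{0,\Omega_D} + \|\nabla\cdot\bm{v}_D\|_{0,\Omega_D} \lesssim \|K w_D\|_{0,\Omega_D}$. Both auxiliary problems have constants depending only on the geometry of $\Omega_S$ and $\Omega_D$, not on $\mu$ or $K$; the parameter dependence enters purely through the right-hand sides. A direct computation then gives $b(\bm{v}, w) = \|\mu^{-\half}w_S\|_{0,\Omega_S}^2 + \|K^{\half}w_D\|_{0,\Omega_D}^2 = \|w\|_W^2$, while rearranging the weights in the two bounds above yields $\|\bm{v}\|_V \lesssim \|w\|_W$, together implying the parameter-robust inf-sup estimate $b(\bm{v}, w) \gtrsim \|\bm{v}\|_V \|w\|_W$. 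The delicate point is precisely ensuring that the two divergence right-inverses can be chosen from the parameter-free spaces $\bm{V}_S^0$ and $\bm{V}_D^0$, after which the weighted-norm tracking is routine.
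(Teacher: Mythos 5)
Your treatment of \eqref{ineq: a_cont}, \eqref{ineq: b_cont}, and \eqref{ineq: a_coercive} matches the paper's proof: weighted Cauchy--Schwarz, the trace inequality together with $\beta \lesssim \mu$ for the Beavers--Joseph--Saffman term, vanishing of both divergences on the kernel of $b$, and Korn's inequality. No issues there.

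The inf-sup construction, however, has a genuine gap. You build the test function entirely inside $\bm{V}^0$, i.e.\ with zero normal flux across $\Gamma$, and you justify the Darcy right-inverse by imposing the essential condition on $\partial_p\Omega_D$. This only works when $|\partial_p\Omega_D| > 0$, which is \emph{not} among the standing assumptions: the paper only assumes $|\partial_\sigma\Omega_S \cup \partial_p\Omega_D| > 0$ with focus on $|\partial_\sigma\Omega_S| > 0$, and the case $|\partial_p\Omega_D| = 0$ (no-flux on all of $\partial\Omega_D\setminus\Gamma$, as in the paper's second numerical test) must be covered by the lemma. In that case any $\bm{v}_D \in \bm{V}_D^0$ has vanishing normal trace on the whole of $\partial\Omega_D$, so the divergence theorem forces $(\nabla\cdot\bm{v}_D, 1)_{\Omega_D} = 0$, and the equation $-\nabla\cdot\bm{v}_D = K w_D$ is unsolvable for general $w_D$: the Neumann problem is incompatible. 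This is exactly why the paper's proof does \emph{not} take $\bm{v}$ from $\bm{V}^0$ but first constructs an interface flux $\phi \in H_0^1(\Gamma)$ via the constrained minimization \eqref{eq: phi constraint}, with $(\phi,1)_\Gamma = (Kw_D,1)_{\Omega_D}$, so that the extension $\myR_D\phi$ carries the net mass $\int_{\Omega_D} K w_D$ across $\Gamma$ and renders the remaining Neumann problem for $\bm{v}_D^0$ compatible (and correspondingly modifies the Stokes-side auxiliary problem by the term $\nabla\cdot\myR_S\phi$). The bound $\|\phi\|_{1,\Gamma} \lesssim \|Kw_D\|_{0,\Omega_D}$ and the continuity of the extension operators then keep the whole construction controlled by $\|w\|_W$. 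What you have written is essentially the paper's Corollary~\ref{cor: infsup V0}, which the paper explicitly labels as the special case $|\partial_p\Omega_D| > 0$ in which ``the construction of $\phi$ becomes obsolete''; the lemma itself requires the interface flux. The point you flag as ``delicate'' --- choosing the divergence right-inverses from $\bm{V}_S^0$ and $\bm{V}_D^0$ --- is precisely where the argument breaks, and the repair requires leaving $\bm{V}^0$.
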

\begin{proof}
	Using the Cauchy-Schwarz inequality, the assumptions \eqref{eqs: material parameter bounds}, and a trace inequality for $H^1$, we obtain the continuity bounds \eqref{ineq: a_cont} and \eqref{ineq: b_cont}:	
\begin{subequations} \label{ineqs: continuity}
	\begin{align}
		a(\bm{u}, \bm{v}) 
		&\lesssim \|\mu^{\half} \bm{u}_S \|_{1, \Omega_S} 
		\|\mu^{\half} \bm{v}_S \|_{1, \Omega_S} 
		+ \| \beta^{\half} \bm{\tau} \cdot \bm{u}_S \|_{0, \Gamma}
		\| \beta^{\half} \bm{\tau} \cdot \bm{v}_S \|_{0, \Gamma} 
		+ \| K^{-\half} \bm{u}_D \|_{0, \Omega_D}
		\| K^{-\half} \bm{v}_D \|_{0, \Omega_D} \nonumber\\
		&\lesssim \| \bm{u} \|_V \| \bm{v} \|_V, \\
		b(\bm{v}, w) 
		&\lesssim \| \mu^{\half} \bm{v}_S \|_{1, \Omega_S} \| \mu^{-\half} w_S \|_{0, \Omega_S} 
		+ \| K^{-\half} \nabla \cdot \bm{v}_D \|_{0, \Omega_D} \| K^{\half} w_D \|_{0, \Omega_D} \nonumber\\
		&\lesssim \| \bm{v} \|_V \| w \|_W.
	\end{align}
\end{subequations}

	For the proof of inequality \eqref{ineq: a_coercive}, we first note that if $b(\bm{v}, w) = 0$ for all $w \in W$, then $\nabla \cdot \bm{v}_D = 0$. Combining this observation with Korn's inequality gives us:
	\begin{align} \label{eq: proof 3.5c}
		a(\bm{v}, \bm{v}) &\gtrsim
		\| \mu^{\half} \bm{v}_S \|_{1, \Omega_S}^2
		+ \| \beta^{\half} \bm{\tau} \cdot \bm{v}_S \|_{0, \Gamma}^2
		+ \| K^{-\half} \bm{v}_D \|_{0, \Omega_D}^2 \nonumber\\
		&\ge
		\| \mu^{\half} \bm{v}_S \|_{1, \Omega_S}^2
		+ \| K^{-\half} \nabla \cdot \bm{v}_D \|_{0, \Omega_D}^2 
		+ \| K^{-\half} \bm{v}_D \|_{0, \Omega_D}^2
		= \| \bm{v} \|_V^2.
	\end{align}

	Inequality \eqref{ineq: b_infsup} is the inf-sup condition relevant for this formulation. For a given $w = (w_S, w_D) \in W$, let us construct $\bm{v} = (\bm{v}_S^0, \phi, \bm{v}_D^0) \in \bm{V}$ in the following manner. First, let the interface function $\phi \in H_0^1(\Gamma)$ solve the following, constrained minimization problem:
	\begin{align} \label{eq: phi constraint}
		\min_{\varphi \in H_0^1(\Gamma)} \tfrac12 &\| \varphi \|_{1, \Gamma}^2,
		&\text{subject to } (\varphi, 1)_{\Gamma} &= (K w_D, 1)_{\Omega_D}.
	\end{align}
	The solution $\phi$ then satisfies the two key properties:
	\begin{subequations}
	\begin{align}
		\| \phi \|_{1, \Gamma} &\lesssim \| K w_D \|_{0, \Omega_D}, \label{eq: bound on phi} \\
		(\nabla \cdot \myR_D \phi, 1)_{\Omega_D} &=
		(- \bm{n} \cdot \myR_D \phi, 1)_{\Gamma} = 
		(- \phi, 1)_{\Gamma} = 
		- (K w_D, 1)_{\Omega_D}. \label{eq: compatibility of phi}
	\end{align}
	\end{subequations}
	Bound \eqref{eq: bound on phi} can be deduced by constructing a function $\psi \in H_0^1(\Gamma)$ that satisfies the constraint in \eqref{eq: phi constraint} and is bounded in the sense of \eqref{eq: bound on phi}. It then follows that the minimizer $\phi$ satisfies \eqref{eq: bound on phi} as well.
	
	Next, we construct $\bm{v}_i^0 \in \bm{V}_i^0$ for $i \in \{S, D\}$. For that, we first introduce $p_S \in H^2(\Omega_S)$ as the solution to the following auxiliary problem
	\begin{subequations} \label{eqs: aux prob p_S}
	\begin{align}
		- \nabla \cdot \nabla p_S &= \mu^{-1} w_S + \nabla \cdot \myR_S \phi, \\
		p_S|_{\partial_\sigma \Omega_S} &= 0, \\
		(\bm{n} \cdot \nabla p_S)|_{\partial \Omega_S \setminus \partial_\sigma \Omega_S} &= 0.
	\end{align}
	\end{subequations}
	Similarly, we define $p_D \in H^2(\Omega_D)$ such that
	\begin{subequations} \label{eqs: aux prob p_D}
	\begin{align}
		- \nabla \cdot \nabla p_D &= K w_D + \nabla \cdot \myR_D \phi, \\
		(\bm{n} \cdot \nabla p_D)|_{\partial \Omega_D} &= 0.
	\end{align}
	\end{subequations}
	We note that \eqref{eqs: aux prob p_D} is a Neumann problem. We therefore verify the compatibility of the right hand side by using \eqref{eq: compatibility of phi} in the following derivation:
	\begin{align*}
		( K w_D + \nabla \cdot \myR_D \phi, 1)_{\Omega_D}
		= ( K w_D - K w_D, 1)_{\Omega_D}
		= 0.
	\end{align*}
	
	Let $\bm{v}_S^0 := \nabla p_S$ and $\bm{v}_D^0:= \nabla p_D$. From the elliptic regularity of the auxiliary problems, see e.g. \cite{evans2010partial}, we obtain the bounds
	\begin{subequations}
	\begin{align}
		\| \bm{v}_S^0 \|_{1, \Omega_S} \lesssim 
		\| \mu^{-1} w_S \|_{0, \Omega_S}  + \| \nabla \cdot \myR_S \phi \|_{0, \Omega_S}, \\
		\| \bm{v}_D^0 \|_{1, \Omega_D} \lesssim 
		\| K w_D \|_{0, \Omega_D}  + \| \nabla \cdot \myR_D \phi \|_{0, \Omega_D}.
	\end{align}
	\end{subequations}

	Next, we set $\bm{v}_S := \bm{v}_S^0 + \myR_S \phi$ and $\bm{v}_D := \bm{v}_D^0 + \myR_D \phi$. Combining the bounds on $\bm{v}_S^0$ and $\phi$ with the continuity of the extension operators from \eqref{eq: continuity R_i} and the material parameter bounds \eqref{eqs: material parameter bounds}, we derive
\begin{subequations}
	\begin{align}
		\| \mu^{\half} \bm{v}_S \|_{1, \Omega_S}
		&\le \| \mu^{\half} \bm{v}_S^0 \|_{1, \Omega_S} 
		+ \| \mu^{\half} \myR_S \phi \|_{1, \Omega_S} \nonumber \\
		&\lesssim \| \mu^{-\half} w_S \|_{0, \Omega_S} 
		+ \| \mu^{\half} \nabla \cdot \myR_S \phi \|_{0, \Omega_S} 
		+ \| \mu^{\half} \myR_S \phi \|_{1, \Omega_S} \nonumber \\
		&\lesssim \| \mu^{-\half} w_S \|_{0, \Omega_S} 
		+ \| \mu^{\half} \phi \|_{\half, \Gamma} \nonumber \\
		&\lesssim \| \mu^{-\half} w_S \|_{0, \Omega_S} 
		+ \| \mu^{\half} K w_D \|_{0, \Omega_D}  \nonumber \\
		&\lesssim \| \mu^{-\half} w_S \|_{0, \Omega_S} 
		+ \| K^{\half} w_D \|_{0, \Omega_D}.
	\end{align}
	Similarly, $\bm{v}_D$ is bounded in the following sense:
	\begin{align}
		\| K^{-\half} \bm{v}_D \|_{0, \Omega_D}
		+ \| K^{-\half} \nabla \cdot \bm{v}_D \|_{0, \Omega_D}
		&\le 
		\| K^{-\half} \bm{v}_D^0 \|_{0, \Omega_D}
		+ \| K^{-\half} \myR_D \phi \|_{0, \Omega_D}
		+ \| K^{\half} w_D \|_{0, \Omega_D} \nonumber \\
		&\lesssim
		\| K^{-\half} \nabla \cdot \myR_D \phi \|_{0, \Omega_D}
		+ \| K^{-\half} \myR_D \phi \|_{0, \Omega_D}
		+ \| K^{\half} w_D \|_{0, \Omega_D} \nonumber \\
		&\lesssim
		\| K^{-\half} \phi \|_{-\half, \Gamma}
		+ \| K^{\half} w_D \|_{0, \Omega_D} \nonumber \\
		&\lesssim
		\| K^{\half} w_D \|_{0, \Omega_D}.
	\end{align}
	\end{subequations}
	In the final step, we have used that $H^1(\Gamma) \subseteq H^{-\half}(\Gamma)$ and \eqref{eq: bound on phi}.
	
	By construction, $\bm{v}$ now satisfies the following two properties:
	\begin{subequations} \label{eqs: proof b inf sup}
	\begin{align}
		\| \bm{v} \|_V 
		&= \left(\| \mu^{\half} \bm{v}_S \|_{1, \Omega_S}^2
				+ \| K^{-\half} \bm{v}_D \|_{0, \Omega_D}^2
				+ \| K^{-\half} \nabla \cdot \bm{v}_D \|_{0, \Omega_D}^2 \right)^\half
		\lesssim \| w \|_W, \\
		b(\bm{v}, w) &= -(\nabla \cdot \bm{v}_S, w_S)_{\Omega_S}
		-(\nabla \cdot \bm{v}_D, w_D)_{\Omega_D} \nonumber\\
		&= -(\nabla \cdot (\nabla p_S + \myR_S \phi), w_S)_{\Omega_S}
		-(\nabla \cdot (\nabla p_D + \myR_D \phi), w_D)_{\Omega_D}\nonumber\\
		&= \| \mu^{-\half} w_S \|_{0, \Omega_S}^2
		+ \| K^{\half} w_D \|_{0, \Omega_D}^2 \nonumber\\
		&=\| w \|_W^2.
	\end{align}
	\end{subequations}
	The proof is concluded by gathering \eqref{eqs: proof b inf sup}.
\end{proof}

In the special case of $|\partial_p \Omega_D| > 0$, the Darcy subproblem is itself well-posed. This can be used to our advantage in the proof of \eqref{ineq: b_infsup}. In particular, the construction of $\phi \in \Lambda$ becomes obsolete, as shown in the following corollary.

\begin{corollary} \label{cor: infsup V0}
	If $|\partial_p \Omega_D| > 0$, then for each $w \in W$, there exists $\bm{v}^0 \in \bm{V}^0$ with $\bm{v}^0 \ne 0$ such that 
	\begin{align*}
		b(\bm{v}^0, w) \gtrsim \| \bm{v}^0 \|_V \| w \|_W.
	\end{align*}
\end{corollary}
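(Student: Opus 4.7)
The plan is to replay the construction from the proof of inequality \eqref{ineq: b_infsup} in Lemma~\ref{lem: inequalities}, but with the interface function $\phi$ set to zero. In that earlier proof, $\phi$ was introduced solely to absorb the mean-value compatibility obstruction created by the pure-Neumann Darcy auxiliary problem \eqref{eqs: aux prob p_D}. The extra hypothesis $|\partial_p \Omega_D|>0$ removes this obstruction, since one can now impose a Dirichlet condition for $p_D$ on $\partial_p\Omega_D$ and obtain a well-posed mixed boundary value problem with no compatibility condition on the source.

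Concretely, given $w \in W$, I would define $p_S \in H^2(\Omega_S)$ by \eqref{eqs: aux prob p_S} with the term $\nabla \cdot \myR_S \phi$ dropped from the right-hand side, and $p_D \in H^2(\Omega_D)$ as the unique solution of
\begin{align*}
	-\nabla \cdot \nabla p_D &= K w_D \text{ in } \Omega_D, \\
	p_D &= 0 \text{ on } \partial_p \Omega_D, \\
	\bm{n} \cdot \nabla p_D &= 0 \text{ on } \partial \Omega_D \setminus \partial_p \Omega_D.
\end{align*}
Setting $\bm{v}^0|_{\Omega_i} := \nabla p_i$ for $i \in \{S, D\}$, the Neumann conditions imposed on $\Gamma \subseteq \partial\Omega_S\setminus\partial_\sigma\Omega_S$ and $\Gamma \subseteq \partial\Omega_D\setminus\partial_p\Omega_D$ in the two subproblems ensure that $\bm{n} \cdot \bm{v}^0|_\Gamma = 0$, so $\bm{v}^0 \in \bm{V}^0$.

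The remaining estimates are essentially a direct copy of the computations in \eqref{eqs: proof b inf sup} with every $\phi$-dependent term erased. Elliptic regularity for the two auxiliary problems yields $\| \bm{v}^0 \|_V \lesssim \| w \|_W$, and integration by parts in $b(\bm{v}^0, w)$ produces $\| \mu^{-\half} w_S \|_{0,\Omega_S}^2 + \| K^{\half} w_D \|_{0,\Omega_D}^2 = \| w \|_W^2$. Combining these gives $b(\bm{v}^0, w) = \| w \|_W^2 \gtrsim \| \bm{v}^0 \|_V \| w \|_W$, which is the claim. I do not expect a genuine obstacle here: the only point that truly requires the new hypothesis is the well-posedness of the modified Darcy auxiliary problem, and this is precisely what $|\partial_p \Omega_D| > 0$ provides.
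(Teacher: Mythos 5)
Your proposal is correct and follows essentially the same route as the paper: set $\phi = 0$, replace the pure-Neumann Darcy auxiliary problem by a mixed Dirichlet--Neumann problem (well-posed precisely because $|\partial_p \Omega_D| > 0$), and repeat the elliptic-regularity estimates from Lemma~\ref{lem: inequalities} with the $\phi$-terms removed. No gaps.
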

\begin{proof}
	We follow the same arguments as for \eqref{ineq: b_infsup} in Lemma~\ref{lem: inequalities}. The main difference is that we now set $\phi = 0$ and solve auxiliary Poisson problems to obtain $(\bm{v}_S^0, \bm{v}_D^0) \in \bm{V}_S^0 \times \bm{V}_D^0$ such that
	\begin{align*}
		- \nabla \cdot \bm{v}_S^0 &= \mu^{-1} w_S, &
		- \nabla \cdot \bm{v}_D^0 &= K w_D, \\
		(\bm{n} \cdot \bm{v}_S^0)|_{\partial \Omega_S \setminus \partial_\sigma \Omega_S} &= 0, &
		(\bm{n} \cdot \bm{v}_D^0)|_{\partial \Omega_D \setminus \partial_p \Omega_D} &= 0.
	\end{align*}
	Since both $\partial_\sigma \Omega_S$ and $\partial_p \Omega_D$ have positive measure, these two subproblems are well-posed and the statement follows by elliptic regularity.
\end{proof}

We are now ready to present the main result of this section, namely that problem \eqref{eq: variational formulation} is well-posed with respect to the weighted norms of \eqref{eq: norms}.
\begin{theorem} \label{thm: well-posedness}
	Problem \eqref{eq: variational formulation} is well-posed, i.e. a unique solution $(\bm{u}, p) \in \bm{V} \times W$ exists satisfying
	\begin{align}
		\| \bm{u} \|_V + \| p \|_W 
		\lesssim 
		\| \mu^{-\half} \bm{f}_S \|_{-1, \Omega_S}
		+ \| K^{-\half} f_D \|_{0, \Omega_D}
		+ \| K^{\half} g_p \|_{\half, \partial_p \Omega_D}.
	\end{align}
\end{theorem}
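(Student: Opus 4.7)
The plan is to invoke the classical Brezzi theory for saddle-point problems, since all four required conditions have been verified in Lemma~\ref{lem: inequalities}. Continuity of $a$, continuity of $b$, coercivity of $a$ on the kernel $\{\bm{v}\in \bm{V} : b(\bm{v},w)=0 \ \forall w \in W\}$, and the inf-sup condition for $b$ together imply, by a standard reference such as \cite{boffi2013mixed}, that \eqref{eq: variational formulation} has a unique solution $(\bm{u},p)\in \bm{V}\times W$ with
\begin{align*}
	\| \bm{u} \|_V + \| p \|_W \lesssim \| f_u \|_{\bm{V}'} + \| f_p \|_{W'},
\end{align*}
where the constant depends only on the continuity, coercivity, and inf-sup constants that were already shown to be independent of $\mu$, $K$, and $h$. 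The remaining work is therefore to estimate the two functional norms on the right-hand side in terms of the data norms that appear in the theorem.

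For $f_p$, I would simply rewrite $f_p(w) = -(K^{-\half} f_D,\, K^{\half} w_D)_{\Omega_D}$ and apply Cauchy--Schwarz to obtain $\| f_p\|_{W'} \le \| K^{-\half} f_D \|_{0,\Omega_D}$. For the volumetric part of $f_u$, I would proceed analogously by pairing against the $H^1$-part of the test function: $(\bm{f}_S,\bm{v}_S)_{\Omega_S} = (\mu^{-\half}\bm{f}_S,\,\mu^{\half}\bm{v}_S)_{\Omega_S} \le \|\mu^{-\half}\bm{f}_S\|_{-1,\Omega_S} \, \|\mu^{\half}\bm{v}_S\|_{1,\Omega_S}$, and the second factor is bounded by $\|\bm{v}\|_V$ by definition \eqref{eq: norms}. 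Note that the Poincar\'e inequality that is implicit in the $H^{-1}$ duality pairing is available here because $\bm{v}_S$ vanishes on $\partial_u \Omega_S$, which has positive measure.

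The main technical step is the boundary contribution $(g_p,\,\bm{n}\cdot \bm{v}_D)_{\partial_p \Omega_D}$. Here I would use the standard trace duality pairing on $\partial_p \Omega_D$ together with the $H(\div,\Omega_D)$ trace theorem to write
\begin{align*}
	(g_p, \bm{n}\cdot \bm{v}_D)_{\partial_p \Omega_D}
	= (K^{\half} g_p,\, K^{-\half}\bm{n}\cdot \bm{v}_D)_{\partial_p \Omega_D}
	\lesssim \| K^{\half} g_p \|_{\half,\partial_p \Omega_D}\,
	\bigl(\| K^{-\half} \bm{v}_D \|_{0,\Omega_D} + \| K^{-\half}\nabla\cdot \bm{v}_D \|_{0,\Omega_D}\bigr),
\end{align*}
where the last factor is again controlled by $\|\bm{v}\|_V$. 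The subtlety to check is that the weight $K^{-\half}$ can be pulled inside the trace estimate uniformly; this is legitimate because $K$ is a positive constant, so it commutes with both the trace operator and the divergence.

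Combining the three estimates yields $\|f_u\|_{\bm{V}'}+\|f_p\|_{W'}$ bounded by exactly the right-hand side in the theorem, and substitution into Brezzi's estimate concludes the proof. The only piece that requires care is the trace pairing for $g_p$ in weighted norms; the rest is a mechanical assembly of Cauchy--Schwarz with the norm definitions \eqref{eq: norms}.
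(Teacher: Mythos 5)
Your proposal is correct and follows essentially the same route as the paper: both invoke standard saddle-point (Brezzi) theory on the basis of Lemma~\ref{lem: inequalities} and then bound the right-hand side functionals term by term via weighted Cauchy--Schwarz, the $H^{-1}$--$H^1$ duality for $\bm{f}_S$, and the $H(\div,\Omega_D)$ trace duality for the $g_p$ boundary term. The paper's proof is exactly this assembly, so no further comparison is needed.
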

\begin{proof}
	With the inequalities from \Cref{lem: inequalities}, it suffices to show continuity of the right-hand side. Let us therefore apply the Cauchy-Schwarz inequality followed by a trace inequality:
	\begin{align*}
		f_u(\bm{v}) + f_p(w) &= ( \bm{f}_S, \bm{v}_S )_{\Omega_S} 
		+ (f_D, w_D)_{\Omega_D} 
		+ ( g_p, \bm{n} \cdot \bm{v}_D )_{\partial_p \Omega_D} \\
		&\le \| \mu^{-\half} \bm{f}_S \|_{-1, \Omega_S} \| \mu^{\half} \bm{v}_S \|_{1, \Omega_S}
		+ \| K^{-\half} f_D \|_{0, \Omega_D} \| K^{\half} w_D \|_{0, \Omega_D} 
		\\
		& \ \ + \| K^{\half} g_p \|_{\half, \partial_p \Omega_D} \| K^{-\half} \bm{n} \cdot \bm{v}_D \|_{-\half, \partial_p \Omega_D} \\
		&\lesssim \left( \| \mu^{-\half} \bm{f}_S \|_{-1, \Omega_S} 
		+ \| K^{-\half} f_D \|_{0, \Omega_D} 
		+ \| K^{\half} g_p \|_{\half, \partial_p \Omega_D} \right) 
		\left( \| \bm{v} \|_{V} + \| w \|_W \right).
	\end{align*}
	With the continuity of the right-hand shown, all requirements are satisfied to invoke standard saddle point theory \cite{boffi2013mixed}, proving the claim.
\end{proof}

\section{The Steklov-Poincar\'e System}
\label{sec:the_steklov_poincare_system}

The strategy is to introduce the Steklov-Poincar\'e operator $\Sigma$ and reduce the system \eqref{eq: variational formulation} to a problem concerning only the interface flux $\phi$. The reason for this is twofold. First, since the interface is a lower-dimensional manifold, the problem is reduced in dimensionality and is therefore expected to be easier to solve. Second, we show that the resulting system is symmetric and positive-definite and hence amenable to a large class of iterative solvers including the Minimal Residual (MinRes) and the Conjugate Gradient (CG) method.

We start with the case in which both the pressure and stress boundary conditions are prescribed on a part of the boundary with positive measure, i.e. we assume that $| \partial_\sigma \Omega_S | > 0$ and $| \partial_p \Omega_D | > 0$. The cases in which one, or both, of the subproblems have pure Neumann boundary conditions are considered afterward.

In order to construct the reduced problem, we use the bilinear forms and functionals from \eqref{eq: bilinear forms} and the extension operator $\myR$ from Section~\ref{sub:functional_setting} and define the operator $\Sigma: \Lambda \to \Lambda^*$ and $\chi \in \Lambda^*$ as
\begin{subequations} 
\begin{align}
	\langle \Sigma \phi, \varphi \rangle &:=
	a(\bm{u}_\star^0 + \myR \phi, \myR \varphi) + b(\myR \varphi, p_\star), \label{eq: def Sigma}\\
	\langle \chi, \varphi \rangle &:=
	f_u(\myR \varphi) - a(\bm{u}_0^0, \myR \varphi) - b(\myR \varphi, p_0),
\end{align}
\end{subequations}	
in which $\langle \cdot, \cdot \rangle$ denotes the duality pairing on $\Lambda^* \times \Lambda$. Here, the pair $(\bm{u}_\star^0, p_\star) \in \bm{V}^0 \times W$ satisfies
\begin{subequations} \label{eq: auxiliary problem _phi}
	\begin{align}
		a(\bm{u}_\star^0, \bm{v}^0) + b(\bm{v}^0, p_\star)
		&= - a(\myR \phi, \bm{v}^0), 
		& \forall \bm{v}^0 &\in \bm{V}^0, \\
		b(\bm{u}_\star^0, w) 
		&= - b(\myR \phi, w), 
		& \forall w &\in W.
		\label{eq aux problem eq2}
	\end{align}
\end{subequations}
and the pair $(\bm{u}_0^0, p_0) \in \bm{V}^0 \times W$ is defined such that
\begin{subequations} \label{eq: auxiliary problem _0}
\begin{align}
		a(\bm{u}_0^0, \bm{v}^0) + b(\bm{v}^0, p_0)
		&= f_u(\bm{v}^0),
		& \forall \bm{v}^0 &\in \bm{V}^0, \\
		b(\bm{u}_0^0, w) 
		&= f_p(w), 
		& \forall w &\in W.
\end{align}
\end{subequations}

With the above definitions, we introduce the reduced interface problem as: \\
Find $\phi \in \Lambda$ such that 
\begin{align} \label{eq: poincare steklov}
	\langle \Sigma \phi, \varphi \rangle &=
	\langle \chi, \varphi \rangle,
	& \forall \varphi &\in \Lambda.
\end{align}

Note that setting $\bm{u} := \bm{u}_\star^0 + \bm{u}_0^0 + \myR \phi$ and $p := p_\star + p_0$ yields the solution to the original problem \eqref{eq: variational formulation}. Hence, if this problem admits a unique solution, then \eqref{eq: poincare steklov} and \eqref{eq: variational formulation} are equivalent. 

Similar to the analysis of problem~\eqref{eq: variational formulation} in Section~\ref{sec:well_posedness}, we require an appropriate, parameter-dependent norm on functions $\varphi \in \Lambda$ in order to analyze \eqref{eq: poincare steklov}. Let us therefore define
\begin{align} \label{eq: norm phi}
		\| \varphi \|_{\Lambda}^2 &:= 
		\| \mu^{\half} \varphi \|_{\half, \Gamma}^2
		+ \| K^{-\half} \varphi \|_{-\half, \Gamma}^2.
\end{align}
We justify this choice by proving two bounds with respect to $\| \cdot \|_V$ in the following lemma. These results are then used in a subsequent theorem to show that $\Sigma$ is continuous and coercive with respect to $\| \cdot \|_\Lambda$.

\begin{lemma} \label{lem: norm equivalences}
	Given $\phi \in \Lambda$, then the following bounds hold for any $\bm{u}^0 \in \bm{V}^0$:
	\begin{align}
		\| \phi \|_{\Lambda}
		&\lesssim \| \bm{u}^0 + \myR \phi \|_V, &
		\| \myR \phi \|_V &\lesssim 
		\| \phi \|_{\Lambda}.
	\end{align}
\end{lemma}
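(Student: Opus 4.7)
The plan is to prove the two bounds independently. For the second bound $\| \myR \phi \|_V \lesssim \| \phi \|_\Lambda$, I would simply unfold the definition of $\| \cdot \|_V$ applied to $\myR \phi = (\myR_S \phi, \myR_D \phi)$, pull the constants $\mu^{\half}$ and $K^{-\half}$ outside the norms (permitted since they are homogeneous scalars), and apply the continuity estimates \eqref{eq: continuity R_i} for the extension operators. This gives
\begin{align*}
    \| \myR \phi \|_V^2
    &\lesssim \mu \| \phi \|_{\half, \Gamma}^2 + K^{-1} \| \phi \|_{-\half, \Gamma}^2
    = \| \phi \|_\Lambda^2.
\end{align*}

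For the first bound $\| \phi \|_\Lambda \lesssim \| \bm{u}^0 + \myR \phi \|_V$, the key observation is that, since $\bm{u}^0 \in \bm{V}^0$ has vanishing normal trace on $\Gamma$ while $\myR \phi$ has normal trace $\phi$ on $\Gamma$ by construction \eqref{eq: extension property}, the normal trace of $\bm{u} := \bm{u}^0 + \myR \phi$ on $\Gamma$ equals $\phi$ on both sides of the interface. I would then estimate the two contributions to $\| \phi \|_\Lambda$ separately using the two different trace theorems available on the two subdomains. On the Stokes side, the standard $H^1$ trace theorem gives $\| \phi \|_{\half, \Gamma} \lesssim \| \bm{u}_S \|_{1, \Omega_S}$, so
\begin{align*}
    \| \mu^{\half} \phi \|_{\half, \Gamma} \lesssim \| \mu^{\half} \bm{u}_S \|_{1, \Omega_S}.
\end{align*}
On the Darcy side, the normal trace theorem for $H(\div, \Omega_D)$ gives $\| \phi \|_{-\half, \Gamma} \lesssim \| \bm{u}_D \|_{0, \Omega_D} + \| \nabla \cdot \bm{u}_D \|_{0, \Omega_D}$, so
\begin{align*}
    \| K^{-\half} \phi \|_{-\half, \Gamma}
    \lesssim \| K^{-\half} \bm{u}_D \|_{0, \Omega_D} + \| K^{-\half} \nabla \cdot \bm{u}_D \|_{0, \Omega_D}.
\end{align*}
Squaring and summing yields $\| \phi \|_\Lambda^2 \lesssim \| \bm{u} \|_V^2$, which is the desired bound.

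The only subtlety, and what I expect to be the main point requiring care, is that the normal trace on $\Gamma$ (a proper subset of $\partial \Omega_D$) lands in the dual of $H^{1/2}_{00}(\Gamma)$ rather than in $H^{-1/2}(\partial \Omega_D)$. This is handled by the assumption that $\Gamma$ meets $\partial \Omega_D$ only along $\partial_u \Omega_D$, on which the normal component of $\bm{u}_D \in \bm{V}_D$ vanishes; the normal trace thus extends by zero outside $\Gamma$, so its $H^{-\half}(\Gamma)$ norm in the sense dual to $\Lambda = H^{1/2}_{00}(\Gamma)$ coincides with the usual $H^{-\half}(\partial \Omega_D)$ bound coming from the $H(\div)$ trace theorem. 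A parallel remark applies to the Stokes side, where the assumption $\partial \Gamma \subseteq \overline{\partial_u \Omega_S}$ ensures $\phi \in H^{1/2}_{00}(\Gamma)$ so that the $H^{\half}$ trace estimate is consistent with the norm appearing in $\| \cdot \|_\Lambda$.
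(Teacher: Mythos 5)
Your proposal is correct and follows essentially the same route as the paper: the second bound via the continuity estimates \eqref{eq: continuity R_i} for the extension operators, and the first bound by noting that the normal trace of $\bm{u}^0 + \myR\phi$ on $\Gamma$ is $\phi$ and applying the trace inequalities for $H^1(\Omega_S)$ and $H(\div,\Omega_D)$. Your closing remark on why the $H(\div)$ normal trace can be measured in the dual of $H^{1/2}_{00}(\Gamma)$ makes explicit a point the paper leaves implicit, but it does not change the argument.
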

\begin{proof}
	We apply trace inequalities in $H^1(\Omega_S)$ and $H(\div, \Omega_D)$:
	\begin{align*}
			\| \phi \|_{\Lambda}^2
			&= \| \mu^{\half} \phi \|_{\half, \Gamma}^2
			+ \| K^{-\half} \phi \|_{-\half, \Gamma}^2 \nonumber\\
			&\lesssim\| \mu^{\half} (\bm{u}_S^0 + \myR_S \phi) \|_{1, \Omega_S}^2
			+ \| K^{-\half} (\bm{u}_D^0 + \myR_D \phi) \|_{0, \Omega_D}^2
			+ \| K^{-\half} \nabla \cdot (\bm{u}_D^0 + \myR_D \phi) \|_{0, \Omega_D}^2
			= \| \bm{u}^0 + \myR \phi \|_V^2.
	\end{align*}
	Thus, the first inequality is shown. On the other hand, the continuity of $\myR_i$ for $i \in \{S, D\}$ from \eqref{eq: continuity R_i} gives us
	\begin{align*}
		\| \myR \phi \|_V^2
		&\le \| \mu^{\half} \myR_S \phi \|_{1, \Omega_S}^2
		+ \| K^{-\half} \myR_D \phi \|_{0, \Omega_D}^2
		+ \| K^{-\half} \nabla \cdot \myR_D \phi \|_{0, \Omega_D}^2 \nonumber\\
		&\lesssim \| \mu^{\half} \phi \|_{\half, \Gamma}^2
		+ \| K^{-\half} \phi \|_{-\half, \Gamma}^2
		= \| \phi \|_{\Lambda}^2.
	\end{align*}
\end{proof}

\begin{theorem} \label{thm: sigma SPD}
	The operator $\Sigma: \Lambda \to \Lambda^*$ is symmetric, continuous, and coercive with respect to the norm $\| \cdot \|_{\Lambda}$. 
\end{theorem}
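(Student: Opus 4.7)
The plan is to introduce, for each $\phi \in \Lambda$, the "harmonic" lifting $\tilde{\bm{u}}_\phi := \bm{u}_\star^0 + \myR\phi \in \bm{V}$ obtained from the auxiliary problem \eqref{eq: auxiliary problem _phi}. The heart of the proof is to show the clean identity
\begin{align*}
	\langle \Sigma \phi, \varphi \rangle = a(\tilde{\bm{u}}_\phi, \tilde{\bm{u}}_\varphi),
\end{align*}
after which symmetry, continuity and coercivity all follow from properties already established for $a$ and from \Cref{lem: norm equivalences}.

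To establish the identity, I would write $\myR \varphi = \tilde{\bm{u}}_\varphi - \bm{u}_\star^{0,\varphi}$, where $(\bm{u}_\star^{0,\varphi}, p_\star^\varphi)$ solves \eqref{eq: auxiliary problem _phi} with $\phi$ replaced by $\varphi$, and substitute into the definition \eqref{eq: def Sigma}. The resulting expression
\begin{align*}
	a(\tilde{\bm{u}}_\phi, \tilde{\bm{u}}_\varphi) - \bigl[ a(\tilde{\bm{u}}_\phi, \bm{u}_\star^{0,\varphi}) + b(\bm{u}_\star^{0,\varphi}, p_\star^\phi) \bigr] + b(\tilde{\bm{u}}_\varphi, p_\star^\phi)
\end{align*}
collapses: the bracketed term vanishes because $\tilde{\bm{u}}_\phi$ satisfies the first line of \eqref{eq: auxiliary problem _phi} tested against $\bm{u}_\star^{0,\varphi} \in \bm{V}^0$, and the last term vanishes because \eqref{eq aux problem eq2} gives $b(\tilde{\bm{u}}_\varphi, w) = 0$ for all $w \in W$. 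Since $a$ is symmetric by inspection of \eqref{eq: bilinear forms}, the identity immediately yields symmetry of $\Sigma$.

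For continuity and coercivity, the remaining ingredient is the two-sided bound $\|\tilde{\bm{u}}_\phi\|_V \simeq \|\phi\|_\Lambda$. The upper bound comes from the stability of the auxiliary saddle-point problem \eqref{eq: auxiliary problem _phi}: by \Cref{lem: inequalities} and \Cref{cor: infsup V0} it is well-posed on $\bm{V}^0 \times W$, and the right-hand side functionals $\bm{v}^0 \mapsto -a(\myR\phi, \bm{v}^0)$ and $w \mapsto -b(\myR\phi, w)$ are controlled by $\|\myR\phi\|_V$, which in turn is controlled by $\|\phi\|_\Lambda$ through the second inequality of \Cref{lem: norm equivalences}. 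Hence $\|\bm{u}_\star^0\|_V \lesssim \|\phi\|_\Lambda$ and the triangle inequality gives $\|\tilde{\bm{u}}_\phi\|_V \lesssim \|\phi\|_\Lambda$. Continuity of $\Sigma$ then follows from continuity of $a$:
\begin{align*}
	\langle \Sigma \phi, \varphi \rangle = a(\tilde{\bm{u}}_\phi, \tilde{\bm{u}}_\varphi) \lesssim \|\tilde{\bm{u}}_\phi\|_V \|\tilde{\bm{u}}_\varphi\|_V \lesssim \|\phi\|_\Lambda \|\varphi\|_\Lambda.
\end{align*}

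For coercivity, I use that $\tilde{\bm{u}}_\phi$ lies in the kernel of $b(\cdot, w)$, so \eqref{ineq: a_coercive} gives $a(\tilde{\bm{u}}_\phi, \tilde{\bm{u}}_\phi) \gtrsim \|\tilde{\bm{u}}_\phi\|_V^2$, and the first inequality of \Cref{lem: norm equivalences} (applied with $\bm{u}^0 = \bm{u}_\star^0 \in \bm{V}^0$) gives $\|\tilde{\bm{u}}_\phi\|_V \gtrsim \|\phi\|_\Lambda$. Chaining these yields $\langle \Sigma \phi, \phi \rangle \gtrsim \|\phi\|_\Lambda^2$. I expect the main conceptual obstacle to be establishing the identity $\langle \Sigma \phi, \varphi \rangle = a(\tilde{\bm{u}}_\phi, \tilde{\bm{u}}_\varphi)$ cleanly; once that is in hand, everything is a direct appeal to \Cref{lem: inequalities}, \Cref{cor: infsup V0}, and \Cref{lem: norm equivalences}.
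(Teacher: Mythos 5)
Your proposal is correct and follows essentially the same route as the paper: it uses the well-posedness of the auxiliary problem \eqref{eq: auxiliary problem _phi} (via \Cref{lem: inequalities} and \Cref{cor: infsup V0}), the kernel property from \eqref{eq aux problem eq2}, and the two bounds of \Cref{lem: norm equivalences}. The only difference is cosmetic: you package symmetry, continuity, and coercivity into the single identity $\langle \Sigma \phi, \varphi \rangle = a(\bm{u}_\star^0 + \myR\phi,\, \bm{u}_\varphi^0 + \myR\varphi)$, which the paper establishes only for $\varphi = \phi$ (for coercivity) and in an equivalent expanded form (for symmetry).
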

\begin{proof}
	We first note that the auxiliary problem \eqref{eq: auxiliary problem _phi} is well-posed by Lemma~\ref{lem: inequalities}, Corollary~\ref{cor: infsup V0}, and saddle point theory. Moreover, the right-hand side is continuous due to \eqref{ineq: a_cont} and \eqref{ineq: b_cont}. For given $\phi$, the pair $(\bm{u}_\star^0, p_\star)$ therefore exists uniquely and satisfies
	\begin{align} \label{eq: bound u_phi}
		\| \bm{u}_\star^0 \|_V + \| p_\star \|_W 
		\lesssim 
		\| \myR \phi \|_V.
	\end{align}

	Symmetry is considered next. Let $(\bm{u}_\varphi, p_\varphi)$ be the solution to \eqref{eq: auxiliary problem _phi} with data $\varphi$. By setting $(\bm{v}^0, w) = (\bm{u}_\star^0, p_\star)$ in the corresponding problem, it follows that
	\begin{align*}
		a(\bm{u}_\varphi^0, \bm{u}_\star^0) 
		+ b(\bm{u}_\star^0, p_\varphi)
		+ b(\bm{u}_\varphi^0, p_\star) 
		&=
		- a(\myR \varphi, \bm{u}_\star^0) - b(\myR \varphi, p_\star) 
	\end{align*}
	Substituting this in definition \eqref{eq: def Sigma} and using the symmetry of $a$, we obtain
	\begin{align}
		\langle \Sigma \phi, \varphi \rangle
		&= a(\myR \phi, \myR \varphi) + a(\bm{u}_\star^0, \myR \varphi) + b(\myR \varphi, p_\star) 
		= a(\myR \phi, \myR \varphi) - a(\bm{u}_\star^0, \bm{u}_\varphi^0) - b(\bm{u}_\varphi^0, p_\star) 
		 - b(\bm{u}_\star^0, p_\varphi),
	\end{align}
	and symmetry of $\Sigma$ is shown.

	We continue by proving continuity of $\Sigma$. Employing \eqref{ineq: a_cont} and \eqref{ineq: b_cont} once again, it follows that
	\begin{align}
		\langle \Sigma \phi, \varphi \rangle
		\lesssim 
		(\| \bm{u}_\star^0 \|_V + \| \myR \phi \|_V + \| p_\star \|_W)
		\| \myR \varphi \|_V
		\lesssim 
		\| \myR \phi \|_V
		\| \myR \varphi \|_V
		\lesssim 
		\| \phi \|_\Lambda
		\| \varphi \|_\Lambda
	\end{align}
	in which the second and third inequalities follow from \eqref{eq: bound u_phi} and Lemma~\ref{lem: norm equivalences}, respectively.

	It remains to show coercivity, which we derive by setting $\varphi = \phi$ and $(\bm{v}^0, w) = (\bm{u}_\star^0, p_\star)$ in \eqref{eq: auxiliary problem _phi}:
	\begin{align}
		\langle \Sigma \phi, \phi \rangle
		&= a(\bm{u}_\star^0 + \myR \phi, \myR \phi) + b(\myR \phi, p_\star) 
		= a(\bm{u}_\star^0 + \myR \phi, \myR \phi) + a(\bm{u}_\star^0 + \myR \phi, \bm{u}_\star^0) 
		= a(\bm{u}_\star^0 + \myR \phi, \bm{u}_\star^0 + \myR \phi).
	\end{align}
	Next, we observe that \eqref{eq aux problem eq2} gives us $b(\bm{u}_\star^0 + \myR \phi, w) = 0$ for all $w \in W$. Thus, we use \eqref{ineq: a_coercive} and Lemma~\ref{lem: norm equivalences} to conclude that
	\begin{align}
		\langle \Sigma \phi, \phi \rangle 
		&\gtrsim \| \bm{u}_\star^0 + \myR \phi \|_V^2
		\gtrsim \| \phi \|_\Lambda^2.
	\end{align}
\end{proof}

\begin{corollary}
	Problem \eqref{eq: poincare steklov} is well-posed and the solution $\phi \in \Lambda$ satisfies
	\begin{align}
		\| \phi \|_{\Lambda}
		\lesssim 
		\| \mu^{-\half} \bm{f}_S \|_{-1, \Omega_S}
		+ \| K^{-\half} f_D \|_{0, \Omega_D}
		+ \| K^{\half} g_p \|_{\half, \partial_p \Omega_D}.
	\end{align}
\end{corollary}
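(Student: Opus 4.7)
The plan is to combine the symmetry, continuity, and coercivity of $\Sigma$ from Theorem~\ref{thm: sigma SPD} with a bound on the right-hand side functional $\chi$, and conclude by invoking Lax--Milgram on the symmetric coercive form $\langle \Sigma \cdot, \cdot \rangle$ on $\Lambda$. Existence and uniqueness of $\phi \in \Lambda$ follow immediately from Theorem~\ref{thm: sigma SPD}, so the real content is the quantitative bound, which reduces to estimating $\|\chi\|_{\Lambda^*}$ in terms of the problem data.

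First I would verify that the auxiliary problem \eqref{eq: auxiliary problem _0} is itself well-posed. The bilinear-form inequalities \eqref{ineq: a_cont}--\eqref{ineq: b_infsup} restricted to $\bm{V}^0 \times W$ are available via Lemma~\ref{lem: inequalities} together with Corollary~\ref{cor: infsup V0} (which is where the assumption $|\partial_p \Omega_D| > 0$ is used to get an inf-sup on the subspace $\bm{V}^0$). The continuity of the right-hand side of \eqref{eq: auxiliary problem _0} is the same computation already carried out in the proof of Theorem~\ref{thm: well-posedness}. Standard saddle point theory then yields
\begin{align*}
\| \bm{u}_0^0 \|_V + \| p_0 \|_W
\lesssim
\| \mu^{-\half} \bm{f}_S \|_{-1, \Omega_S}
+ \| K^{-\half} f_D \|_{0, \Omega_D}
+ \| K^{\half} g_p \|_{\half, \partial_p \Omega_D}.
\end{align*}

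Next I would estimate $\langle \chi, \varphi \rangle = f_u(\myR \varphi) - a(\bm{u}_0^0, \myR \varphi) - b(\myR \varphi, p_0)$ for an arbitrary $\varphi \in \Lambda$. Applying the continuity bounds \eqref{ineq: a_cont} and \eqref{ineq: b_cont} to the last two terms, and the same Cauchy--Schwarz/trace argument from Theorem~\ref{thm: well-posedness} to $f_u(\myR \varphi)$, gives
\begin{align*}
|\langle \chi, \varphi \rangle|
\lesssim
\bigl( \| \mu^{-\half} \bm{f}_S \|_{-1, \Omega_S}
+ \| K^{\half} g_p \|_{\half, \partial_p \Omega_D}
+ \| \bm{u}_0^0 \|_V + \| p_0 \|_W \bigr) \| \myR \varphi \|_V.
\end{align*}
The second inequality of Lemma~\ref{lem: norm equivalences} then converts $\| \myR \varphi \|_V$ into $\| \varphi \|_\Lambda$, and combining with the auxiliary-problem estimate above yields
\begin{align*}
\| \chi \|_{\Lambda^*}
\lesssim
\| \mu^{-\half} \bm{f}_S \|_{-1, \Omega_S}
+ \| K^{-\half} f_D \|_{0, \Omega_D}
+ \| K^{\half} g_p \|_{\half, \partial_p \Omega_D}.
\end{align*}

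The conclusion is then mechanical: by Theorem~\ref{thm: sigma SPD} the form $\langle \Sigma \cdot, \cdot \rangle$ is continuous and coercive on $\Lambda$, so Lax--Milgram gives a unique $\phi$ solving \eqref{eq: poincare steklov} with $\| \phi \|_\Lambda \lesssim \| \chi \|_{\Lambda^*}$, and substituting the previous bound finishes the proof. I do not anticipate a genuine obstacle; the only step that requires care is making sure that the parameter weights in the estimate of $\| \chi \|_{\Lambda^*}$ come out exactly as stated, which is handled by tracking the weights through the Cauchy--Schwarz and trace steps exactly as in Theorem~\ref{thm: well-posedness} and by using Lemma~\ref{lem: norm equivalences} rather than re-deriving a trace-type inequality on $\Gamma$.
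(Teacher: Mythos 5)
Your argument is correct, but it takes a different route from the paper. You work entirely inside the reduced problem: you establish well-posedness of the auxiliary problem \eqref{eq: auxiliary problem _0} on $\bm{V}^0 \times W$ (via Lemma~\ref{lem: inequalities}, Corollary~\ref{cor: infsup V0}, and the right-hand-side continuity from Theorem~\ref{thm: well-posedness}), use that together with \eqref{ineq: a_cont}, \eqref{ineq: b_cont}, and the \emph{second} inequality of Lemma~\ref{lem: norm equivalences} to bound $\| \chi \|_{\Lambda^*}$ by the data, and then invoke coercivity of $\Sigma$ to get $\| \phi \|_\Lambda \lesssim \| \chi \|_{\Lambda^*}$. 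The paper instead goes the other way around: after existence and uniqueness from Theorem~\ref{thm: sigma SPD}, it reconstructs the full solution $\bm{u} = \bm{u}_\star^0 + \bm{u}_0^0 + \myR \phi$, $p = p_\star + p_0$ of \eqref{eq: variational formulation}, applies the \emph{first} inequality of Lemma~\ref{lem: norm equivalences} to get $\| \phi \|_\Lambda \lesssim \| \bm{u} \|_V$, and then cites the global stability estimate of Theorem~\ref{thm: well-posedness}. The paper's route is shorter because it reuses the already-proved a priori bound for the coupled system and never needs an explicit dual-norm estimate for $\chi$; your route is somewhat longer but is self-contained at the level of the Steklov--Poincar\'e system, which would be the natural argument if one wanted a bound on $\phi$ without appealing to the equivalence with the full problem. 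Both yield the stated estimate with the same parameter weights.
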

\begin{proof} 
	As shown in Theorem~\ref{thm: sigma SPD}, $\Sigma$ is symmetric and positive-definite. Therefore, \eqref{eq: poincare steklov} admits a unique solution. We then set $\bm{u} = \bm{u}_\star^0 + \bm{u}_0^0 + \myR \phi$ and $p = p_\star + p_0$ and note that $(\bm{u}, p)$ is the solution to \eqref{eq: variational formulation}. By employing Lemma~\ref{lem: norm equivalences}, we note that $\| \phi \|_{\Lambda} \lesssim \| \bm{u} \|_V + \| p \|_W$ and the proof is concluded using the result from Theorem~\ref{thm: well-posedness}.
\end{proof}

\subsection{Neumann Problems}
\label{sub:neumann_cases}

In this section, we consider the case in which one, or both, of the subproblems have flux boundary conditions prescribed on the entire boundary. In other words, the cases in which $| \partial_\sigma \Omega_S | = 0$ or $| \partial_p \Omega_D | = 0$. 
We first introduce the setting in which one of the subdomains corresponds to a Neumann problem, followed by the case of $| \partial_\sigma \Omega_S | = | \partial_p \Omega_D | = 0$.

\subsubsection{Single Neumann Subproblem}
\label{ssub:single_neumann_problem}

Let us consider $| \partial_p \Omega_D | = 0$ and $| \partial_\sigma \Omega_S | > 0$, noting that the converse case follows by symmetry. The complication in this case is that solving the Darcy subproblem results in a pressure distribution that is defined up to a constant. Thus, several preparatory steps need to be made before the interface problem can be formulated and solved. 

The key idea is to pose the interface problem on the subspace of $\Lambda$ containing functions with zero mean. This is done by introducing a function $\phi_\star$ that balances the source term in $\Omega_D$ and subtracting this from the problem. The modified interface problem produces a pressure distribution with zero mean in $\Omega_D$ and we obtain the true pressure average obtained afterwards.

Let us first define the subspace $\Lambda_0 \subset \Lambda$ of functions with zero mean, i.e.
\begin{align} \label{def: Lambda_0}
	\Lambda_0 
	:= \left\{ \varphi_0 \in \Lambda :\ (\varphi_0, 1)_{\Gamma} = 0 \right\}.
\end{align}

We continue by constructing $\phi_\star \in \Lambda \setminus \Lambda_0$. For that, we follow \cite[Sec. 5.3]{quarteroni1999domain} and introduce $\zeta \in \Lambda$ as an interface flux with non-zero mean. For convenience, we choose $\zeta$ such that 
\begin{align} \label{eq: avg equal one}
	(\zeta, 1)_{\Gamma} = 1.
\end{align}
Any bounded $\zeta$ with this property will suffice for our purposes.
As a concrete example, we uniquely define $\zeta$ by solving a minimization problem in $H_0^1(\Gamma)$ with \eqref{eq: avg equal one} as a constraint, similar to the construction \eqref{eq: phi constraint} in Lemma~\ref{lem: inequalities}. 

Next, we test the mass conservation equation \eqref{eq: variational formulation 2nd eq} with $w = 1_{\Omega_D}$, the indicator function of $\Omega_D$. Due to the assumption $| \partial_p \Omega_D | = 0$, the divergence theorem gives us
\begin{align*}
	f_p(1_{\Omega_D})
	= -(f_D, 1)_{\Omega_D} 
	= -(\nabla \cdot (\bm{u}_D^0 + \myR_D \phi), 1)_{\Omega_D} 
	= (\phi, 1)_{\Gamma}.
\end{align*}
Using this observation, we define the function $\phi_\star \in \Lambda \setminus \Lambda_0$ such that $(\phi_\star, 1)_\Gamma = (\phi, 1)_\Gamma$ by setting
\begin{align}
	\phi_\star
	&:=  \zeta f_p(1_{\Omega_D}).
\end{align}

The next step is to pose the interface problem, similar to \eqref{eq: poincare steklov}, in this subspace: \\
Find $\phi_0 \in \Lambda_0$ such that 
\begin{align} \label{eq: poincare steklov_0}
	\langle \Sigma \phi_0, \varphi_0 \rangle &=
	\langle \chi, \varphi_0 \rangle,
	& \forall \varphi_0 &\in \Lambda_0,
\end{align}
with $\Sigma: \Lambda \to \Lambda^*$ and $\chi \in \Lambda^*$ redefined as
\begin{subequations} \label{eqs: def sigma chi 0}
\begin{align}
	\langle \Sigma \phi_0, \varphi_0 \rangle 
	&:=
	a(\bm{u}_0^0 + \myR \phi_0, \myR \varphi_0) + b(\myR \varphi_0, p_0), \\
	\langle \chi, \varphi_0 \rangle
	&:= f_u(\myR \varphi_0) - a(\bm{u}_\star^0 + \myR \varphi_\star, \myR \varphi_0) - b(\myR \varphi_0, p_\star).
\end{align}
\end{subequations}
The construction of the pairs $(\bm{u}_\star^0, p_\star)$ and $(\bm{u}_0^0, p_0)$ now require solving the Darcy subproblem with pure Neumann conditions. We emphasize that due to the nature of these problems, the pressure distributions are defined up to a constant and we therefore enforce $p_\star$ and $p_0$ to have mean zero with the use of Lagrange multipliers.

In particular, let $(\bm{u}_0^0, p_0, r_0) \in \bm{V}^0 \times W \times \mathbb{R}$ satisfy the following:
\begin{subequations} \label{eqs: aux problem u0 p0}
	\begin{align}
		a(\bm{u}_0^0, \bm{v}^0) + b(\bm{v}^0, p_0)
		&= -a(\myR \phi_0, \bm{v}^0), 
		& \forall \bm{v}^0 &\in \bm{V}^0, \\
		(r_0, w)_{\Omega_D} + b(\bm{u}_0^0, w)
		&= -b(\myR \phi_0, w),  
		& \forall w &\in W, \label{eq: conservation aux 0}\\
		(p_0, s)_{\Omega_D} &= 0,
		& \forall s &\in \mathbb{R}.
	\end{align}
\end{subequations}
Similarly, we let $(\bm{u}_\star^0, p_\star, r_\star) \in \bm{V}^0 \times W \times \mathbb{R}$ solve
\begin{subequations} \label{eqs: aux problem u* p*}
	\begin{align}
		a(\bm{u}_\star^0, \bm{v}^0) + b(\bm{v}^0, p_\star)
		&= -a(\myR \phi_\star, \bm{v}^0) + f_u(\bm{v}^0), 
		& \forall \bm{v}^0 &\in \bm{V}^0, \\
		(r_\star, w)_{\Omega_D} + b(\bm{u}_\star^0, w)
		&= - b(\myR \phi_\star, w) + f_p(w), 
		& \forall w &\in W, \label{eq: conservation aux *}\\
		(p_\star, s)_{\Omega_D} &= 0,
		& \forall s &\in \mathbb{R}.
	\end{align}
\end{subequations}
We emphasize that setting $w = 1_{\Omega_D}$ in the conservation equations \eqref{eq: conservation aux 0} and \eqref{eq: conservation aux *} yields $r_\star = r_0 = 0$. Hence, these terms have no contribution to the mass balance.

The solution to problem \eqref{eq: poincare steklov_0} allows us to construct the velocity distribution:
\begin{align}
	\bm{u} := \bm{u}_0^0 + \bm{u}_\star^0 + \myR (\phi_0 + \phi_\star). \label{eq: reconstructed flux}
\end{align}

The next step is to recover the correct pressure average in $\Omega_D$. For that, we presume that the pressure solution is given by $p = p_0 + p_\star + \bar{p}$ with $\bar{p} := c_D 1_{\Omega_D}$ for some $c_D \in \mathbb{R}$. In other words, $\bar{p}$ is zero in $\Omega_S$ and a constant $c_D$ on $\Omega_D$, to be determined next.

Using $\zeta$ from \eqref{eq: avg equal one}, we substitute this function in \eqref{eq: variational formulation 1st eq} and choose the test function $\bm{v} = \myR \zeta$:
\begin{align*}
	a(\bm{u}, \myR \zeta) + b(\myR \zeta, p_0 + p_\star + \bar{p}) = f_u(\myR \zeta).
\end{align*}
Using this relationship and the divergence theorem, we make the following two observations:
\begin{subequations} \label{eq: def c_D}
	\begin{align}
		b(\myR \zeta, \bar{p}) 
		&= 
		f_u(\myR \zeta)
		- a(\bm{u}, \myR \zeta) - b(\myR \zeta, p_0 + p_\star) 
		= \langle \chi - \Sigma \phi_0, \zeta \rangle, \\
		b(\myR \zeta, \bar{p})
		&= - (\nabla \cdot \myR \zeta, \bar{p})_{\Omega_D}
		=  c_D(\zeta, 1)_\Gamma
		= c_D.
	\end{align}
\end{subequations}
Combining these two equations yields $c_D = \langle \chi - \Sigma \phi_0, \zeta \rangle$ and we set
\begin{align}\label{eq: def p bar single}
	\bar{p} := \langle \chi - \Sigma \phi_0, \zeta \rangle 1_{\Omega_D}.
\end{align}

Finally, by setting
$p := p_0 + p_\star + \bar{p}$,
we have obtained $(\bm{u}, p) \in \bm{V} \times W$, the solution to \eqref{eq: variational formulation}. We remark that the well-posedness of \eqref{eq: poincare steklov_0} follows by the same arguments as in \Cref{thm: sigma SPD}. 

\subsubsection{Coupled Neumann Problems} 
\label{ssub:coupled_neumann_problems}

In this case, we have flux conditions prescribed on the entire boundary, i.e. $\partial \Omega = \partial_u \Omega_S \cup \partial_u \Omega_D$. 
We follow the same steps as in Section~\ref{ssub:single_neumann_problem}, and highlight the differences required to treat this case.

Let us consider a slightly more general case than \eqref{eq: bilinear forms} by including a source function $f_S$ in the Stokes subdomain. In other words, the right-hand side of the mass balance equations is given by
\begin{align}
	f_p(w) := -(f_S, w_S)_{\Omega_S} -(f_D, w_D)_{\Omega_D}.
\end{align}
By compatibility of the source function with the boundary conditions, it follows that $f_p(1) = 0$ and therefore
\begin{align}
	f_p(1_{\Omega_D})
	= (f_D, 1)_{\Omega_D}
	= -(f_S, 1)_{\Omega_S}
	= - f_p(1_{\Omega_S}).
\end{align}

Let $\Lambda_0 \subset \Lambda$ be defined as in \eqref{def: Lambda_0} and $\zeta$ as in \eqref{eq: avg equal one}. Using the same arguments as in the previous section, we define
\begin{align}
	\phi_\star
	&:=  \zeta f_p(1_{\Omega_D})
	= - \zeta f_p(1_{\Omega_S}).
\end{align}

The operators $\Sigma$ and $\chi$ are defined as in \eqref{eqs: def sigma chi 0} with the only difference being in the pairs of functions $(\bm{u}_0^0, p_0)$ and $(\bm{u}_\star^0, p_\star)$. As before, these pairs are constructing by solving the separate subproblems. Since these correspond to Neumann problems, it follows that the pressure distributions $p_0$ and $p_\star$ are defined up to a constant in each subdomain. 
We therefore enforce zero mean of these variables in each subdomain with the use of a Lagrange multiplier $s \in S$. Let $S$ be the space of piecewise constant functions given by
\begin{align}
	S := \operatorname{span}\{ 1_{\Omega_S}, 1_{\Omega_D} \}.
\end{align}

We augment problem \eqref{eqs: aux problem u0 p0} to: \\
Find $(\bm{u}_0^0, p_0, r_0) \in \bm{V}^0 \times W \times S$ such that
\begin{subequations}
	\begin{align}
		a(\bm{u}_0^0, \bm{v}^0) + b(\bm{v}^0, p_0)
		&= -a(\myR \phi_0, \bm{v}^0), 
		& \forall \bm{v}^0 &\in \bm{V}^0, \\
		(r_0, w)_{\Omega} + b(\bm{u}_0^0, w)
		&= -b(\myR \phi_0, w),  
		& \forall w &\in W,\\
		(p_0, s)_{\Omega} &= 0,
		& \forall s &\in S.
	\end{align}
\end{subequations}
Problem \eqref{eqs: aux problem u* p*} is changed analogously to produce $(\bm{u}_\star^0, p_\star, r_\star) \in \bm{V}^0 \times W \times S$. After solving the interface problem, all ingredients are available to construct the velocity $\bm{u}$ as in \eqref{eq: reconstructed flux}.

In the construction of the pressure $p$, we compute $c_D = \langle \chi - \Sigma \phi_0, \zeta \rangle$ using the same arguments as in \eqref{eq: def c_D}. Since the pressure is globally defined up to a constant, we ensure that the pressure distribution has mean zero on $\Omega$ by setting
\begin{align} \label{eq: def p bar pure}
	\bar{p} := \langle \chi - \Sigma \phi_0, \zeta \rangle \left(
	1_{\Omega_D} - \frac{| \Omega_D |}{| \Omega |}
	\right).
\end{align}
As before, we set
$p := p_0 + p_\star + \bar{p}$ and obtain the solution $(\bm{u}, p)$ of the original problem \eqref{eq: variational formulation}.

\section{Discretization} 
\label{sec:discretization}

This section presents the discretization of problem \eqref{eq: variational formulation} with the use of the Mixed Finite Element method. By introducing the interface flux as a separate variable, we derive a mortar method reminiscent of \cite{boon2018robust,nordbotten2019unified}, presented in the context of fracture flow. The focus in this section is to introduce this flux-mortar method for the coupled Stokes-Darcy problem and show its stability.

Let $\Omega_{S, h}$, $\Omega_{D, h}$, $\Gamma_h$ be shape-regular tesselations of $\Omega_S$, $\Omega_D$, and $\Gamma$, respectively. Let $\Omega_{S, h}$ and $\Omega_{D, h}$ be constructed independently and consist of simplicial or quadrangular (hexahedral in 3D) elements. Similarly, $\Gamma_h$ is a simplicial or quadrangular mesh of dimension $n - 1$, constructed according to the restrictions mentioned below. 

We impose the following three restriction on the Mixed Finite Element discretization:
\begin{enumerate}
	\item
	For the purpose of structure preservation, the finite element spaces are chosen such that
	\begin{subequations} \label{eq: inclusions}
		\begin{align}
			\bm{V}_{S, h} 
			&\subset \bm{V}_S, &
			\bm{V}_{D, h} 
			&\subset \bm{V}_D, &
			\Lambda_h 
			&\subset \Lambda, \\
			W_{S, h} 
			&\subset W_S, & 
			W_{D, h} 
			&\subset W_D.
		\end{align}
	\end{subequations}
	It is convenient, but not necessary, to define $\Gamma_h$ as the trace mesh of $\Omega_{S, h}$ and $\Lambda_h = (\bm{n} \cdot \bm{V}_{S, h})|_\Gamma$. In this case, it follows that $\Lambda_h \subset H_0^1(\Gamma) \subset H_{00}^{1/2}(\Gamma) = \Lambda$. We moreover define
	\begin{align}
		\bm{V}_{S, h}^0 
			&:= \bm{V}_{S, h} \cap \bm{V}_S^0, &
			\bm{V}_{D, h}^0 
			&:= \bm{V}_{D, h} \cap \bm{V}_D^0.
	\end{align}

	\item
	The Mixed Finite Element spaces $V_{i, h} \times W_{i, h}$ with $i \in \{S, D\}$ are chosen to form stable pairs for the Stokes and Darcy (sub)systems, respectively.
	In particular, bounded interpolation operators $\Pi_{V_i}$ exist for $i \in \{S, D\}$ such that for all $\bm{v} \in \bm{V} \cap H^\epsilon(\Omega)$ with $\epsilon > 0$:
	\begin{align} \label{eq: commutative}
		\Pi_{W_S} \nabla \cdot ((I - \Pi_{V_S}) \bm{v}_S) &= 0, & 
		\nabla \cdot (\Pi_{V_D} \bm{v}_D) &= \Pi_{W_D} \nabla \cdot \bm{v}_D.
	\end{align}
	in which $\Pi_{W_i}$ is the $L^2$-projection onto $W_{i, h}$. 
	Moreover, to ensure local mass conservation, we assume that the space of piecewise constants $(P_0)$ is contained in $W_h$.

	Examples for the Stokes subproblem include $\bm{P}_2-P_0$ in the two-dimensional case as well as the Bernardi-Raugel pair \cite{bernardi1985analysis}. 
	For the Darcy subproblem, stable choices of low-order finite elements include the Raviart-Thomas pair $RT_0-P_0$ \cite{raviart1977mixed} and the Brezzi-Douglas Marini pair $BDM_1-P_0$ \cite{brezzi1985two}. For more examples of stable Mixed Finite Element pairs, we refer the reader to \cite{boffi2013mixed}.

	\item
	For $\phi_h \in \Lambda_h$, let the discrete extension operators $\myR_{i, h}: \Lambda_h \to \bm{V}_{i, h}$ with $i \in \{S, D\}$ satisfy
	\begin{align} \label{eq: extension property h}
		(\phi_h - \bm{n} \cdot \myR_{i, h} \phi_h, \bm{n} \cdot \bm{v}_{i, h} )_{\Gamma} &= 0, & \forall \bm{v}_{i, h} &\in \bm{V}_{i, h}.
	\end{align}
	The extension operators are continuous in the sense that for $\phi_h \in \Lambda_h$, we have
	\begin{align} \label{eq: continuity R_h}
		\| \myR_{S, h} \varphi_h \|_{1, \Omega_S} &\lesssim \| \varphi_h \|_{\half, \Gamma}, &
		\| \myR_{D, h} \varphi_h \|_{0, \Omega_D} + \| \nabla \cdot \myR_D \varphi_h \|_{0, \Omega_D} &\lesssim \| \varphi_h \|_{-\half, \Gamma}.
	\end{align}
	We define $\myR_h \phi = \myR_{S, h} \oplus \myR_{D, h}$Let the mesh $\Gamma_h$ and function space $\Lambda_h$ be chosen such that the kernel of $\myR_h$ is zero:
	\begin{align}
		\myR_h \phi_h = 0 \text{ if and only if } \phi_h = 0.
	\end{align}
	We remark that this is a common restriction encountered in mortar methods (see e.g. \cite{arbogast2000mixed}) and can be satisfied by choosing $\Gamma_h$ sufficiently coarse or constructing $\Lambda_h$ using polynomials of lower order. 
\end{enumerate}

With the above restrictions in place, we define the discretizations of the combined spaces $\bm{V}$ and $W$ as
\begin{subequations}
\begin{align}
	\bm{V}_h &:= \left\{ \bm{v}_h :\ 
	\exists (\bm{v}_{S, h}^0, \varphi_h, \bm{v}_{D, h}^0) \in \bm{V}_{S, h}^0 \times \Lambda_h \times \bm{V}_{D, h}^0
	\text{ such that } \bm{v}_h|_{\Omega_i} = \bm{v}_{i, h}^0 + \myR_{i, h} \varphi_h, \text{ for } i \in \{S, D\} \right\}
	, \\
	W_h &:= W_{S, h} \times W_{D, h}.
\end{align}
\end{subequations}

As in the continuous case, the function space $\bm{V}_h$ is independent of the choice of extension operators. We remark that in the case of non-matching grids or if different polynomial orders are chosen for $\bm{V}_{S, h}$ and $\bm{V}_{D, h}$, we have $V_h \not \subset V$ due to the weaker property of $\myR_h$ in \eqref{eq: extension property h} as opposed to $\myR$ defined by \eqref{eq: extension property}. Nevertheless, a normal flux continuity is imposed in the sense that the normal trace of $\bm{v}_{S, h}$ and $\bm{v}_{D, h}$ are $L^2$ projections of a single variable $\phi_h$.

Again, the subscript $i \in \{S, D\}$ distinguishes the restrictions of $(\bm{v}, w) \in \bm{V}_h \times W_h$ to the different subdomains:
\begin{align}
	\bm{v}_{i, h} &:= \bm{v}_{i, h}^0 + \myR_{i, h} \varphi_h \in \bm{V}_{i, h}, & 
	w_{i, h} &:= w_h|_{\Omega_i} \in W_{i, h}.
\end{align}

We finish this section by formally stating the discrete problem: \\
Find the pair $(\bm{u}_h, p_h) \in \bm{V}_h \times W_h$ such that
\begin{subequations} \label{eq: variational formulation_h}
	\begin{align}
		a(\bm{u}_h, \bm{v}_h) + b(\bm{v}_h, p_h) &= f_u(\bm{v}_h),
		& \forall \bm{v}_h &\in \bm{V}_h, \\
		b(\bm{u}_h, w_h) &= f_p(w_h), 
		& \forall w_h &\in W_h.
	\end{align}
\end{subequations}
with the bilinear forms and functionals defined in \eqref{eq: bilinear forms}.

With the chosen spaces, the discretizations on $\Omega_{S, h}$ and $\Omega_{D, h}$ are stable and consistent for the Stokes and Darcy subproblems, respectively. However, in order to show stability of the method for the fully coupled problem \eqref{eq: variational formulation_h}, we briefly confirm that the relevant inequalities hold independent of the mesh parameter $h$.

\begin{lemma} \label{lem: inequalities_h}
	The following inequalities are satisfied:
	\begin{subequations}
	\begin{align}
			& \text{For } \bm{u}_h, \bm{v}_h \in \bm{V}_h: 
			&  a(\bm{u}_h, \bm{v}_h) &\lesssim \| \bm{u}_h \|_V \| \bm{v}_h \|_V. \label{ineq: a_cont_h}\\
			& \text{For } (\bm{v}_h, w_h) \in \bm{V}_h \times W_h: 
			&  b(\bm{v}_h, w_h) &\lesssim \| \bm{v}_h \|_V \| w_h \|_W. \label{ineq: b_cont_h}\\
			& \text{For } \bm{v}_h \in \bm{V}_h \text{ with } b(\bm{v}_h, w_h) = 0 \ \forall w_h \in W_h: 
			&  a(\bm{v}_h, \bm{v}_h) &\gtrsim \| \bm{v}_h \|_V^2. \label{ineq: a_coercive_h}\\
			& \text{For } w_h \in W_h, \ \exists \bm{v}_h \in \bm{V}_h \text{ with } \bm{v}_h \ne 0, \text{ such that}: 
			&  b(\bm{v}_h, w_h) &\gtrsim \| \bm{v}_h \|_V \| w_h \|_W.\label{ineq: b_infsup_h}
		\end{align}
	\end{subequations}
	\end{lemma}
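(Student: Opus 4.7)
The plan is to verify the four inequalities in essentially the same order as in the proof of Lemma~\ref{lem: inequalities}, reusing the continuous estimates wherever the discrete structure permits, and invoking the three restrictions of Section~\ref{sec:discretization} only where truly needed. The two continuity bounds \eqref{ineq: a_cont_h} and \eqref{ineq: b_cont_h} are essentially free: since $\bm{V}_{S,h} \subset \bm{V}_S$ and the norm $\|\cdot\|_V$ is a sum of subdomain contributions, the Cauchy--Schwarz and trace estimates from \eqref{ineqs: continuity} apply verbatim with $\bm{v}$ replaced by $\bm{v}_h$; no new ingredients are required.

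For the coercivity estimate \eqref{ineq: a_coercive_h}, the first step is to convert the discrete kernel condition into a pointwise statement about the Darcy divergence. Because the Mixed Finite Element pair for Darcy satisfies $\nabla \cdot \bm{V}_{D,h} \subseteq W_{D,h}$ and because $W_h$ contains all piecewise constants, testing $b(\bm{v}_h, w_h) = 0$ with $w_h = \nabla \cdot \bm{v}_{D,h} \in W_{D,h}$ yields $\nabla \cdot \bm{v}_{D,h} = 0$ pointwise. Once this reduction is in place, the Stokes component $\bm{v}_{S,h} \in \bm{V}_{S,h} \subset \bm{V}_S$ still lies in $H^1(\Omega_S)^n$ with vanishing trace on $\partial_u \Omega_S$, so Korn's inequality applies and the remainder of the argument is identical to \eqref{eq: proof 3.5c}.

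The discrete inf-sup \eqref{ineq: b_infsup_h} is the main obstacle and will be proved by a Fortin-type argument. Given $w_h \in W_h \subset W$, Lemma~\ref{lem: inequalities} produces a continuous $\bm{v} \in \bm{V}$ with $b(\bm{v}, w_h) = \|w_h\|_W^2$ and $\|\bm{v}\|_V \lesssim \|w_h\|_W$; recall from that construction that $\bm{v}$ has the decomposition $\bm{v}_i = \nabla p_i + \myR_i \phi$ with sufficient regularity (the $p_i$ lie in $H^2$) to be interpolated. The natural candidate is then $\bm{v}_h := \Pi_{V_S} \bm{v}_S \oplus \Pi_{V_D} \bm{v}_D$. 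By the commuting diagram property \eqref{eq: commutative} combined with the inclusion $W_h \subset L^2$ and the $L^2$-orthogonality of $\Pi_{W_i}$, one obtains
\begin{align*}
(\nabla \cdot \Pi_{V_i} \bm{v}_i, w_{i,h})_{\Omega_i}
= (\Pi_{W_i} \nabla \cdot \Pi_{V_i} \bm{v}_i, w_{i,h})_{\Omega_i}
= (\nabla \cdot \bm{v}_i, w_{i,h})_{\Omega_i},
\end{align*}
for $i \in \{S, D\}$, so that $b(\bm{v}_h, w_h) = b(\bm{v}, w_h) = \|w_h\|_W^2$. The weighted continuity $\|\bm{v}_h\|_V \lesssim \|\bm{v}\|_V \lesssim \|w_h\|_W$ then follows from the uniform boundedness of $\Pi_{V_i}$ in the relevant parameter-weighted norms together with \eqref{eq: continuity R_h}, completing the estimate.

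The one subtlety that will require care, and that I expect to be the genuinely delicate point, is ensuring that the Fortin image $\bm{v}_h$ actually belongs to $\bm{V}_h$, i.e.\ that its two normal traces on $\Gamma$ are the $L^2$-projections of a common $\phi_h \in \Lambda_h$ in the sense of \eqref{eq: extension property h}. The clean way to arrange this is to first project the interface datum $\phi$ to a suitable $\phi_h \in \Lambda_h$, to then replace the continuous extensions $\myR_i \phi$ by the discrete extensions $\myR_{i,h} \phi_h$, and finally to apply $\Pi_{V_i}$ only to the zero-trace residuals $\bm{v}_i - \myR_i \phi \in \bm{V}_i^0$; the restriction that the kernel of $\myR_h$ is trivial, together with the continuity \eqref{eq: continuity R_h} and the approximation properties of the projection onto $\Lambda_h$, then supply the missing norm bound uniformly in $h$ and in the material parameters.
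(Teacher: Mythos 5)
Your treatment of \eqref{ineq: a_cont_h}, \eqref{ineq: b_cont_h}, and \eqref{ineq: a_coercive_h} matches the paper's proof: the continuity bounds are inherited verbatim from \eqref{ineqs: continuity}, and the kernel condition forces $\nabla \cdot \bm{v}_{D,h} = 0$ exactly because $\nabla \cdot \bm{V}_{D,h} \subseteq W_{D,h}$ (the paper phrases this through the commuting property \eqref{eq: commutative}, but it is the same fact), after which \eqref{eq: proof 3.5c} applies unchanged. You have also correctly identified the genuinely delicate point of \eqref{ineq: b_infsup_h}: the Fortin image must land in $\bm{V}_h$, so the interface datum must be discretized and only the zero-trace parts may be interpolated. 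This is precisely the structure of the paper's argument.

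However, the order of operations in your proposed fix has a concrete gap. You construct the continuous $\phi$ and the auxiliary potentials $p_i$ first (so that $-\nabla\cdot\bm{v}_S^0 = \mu^{-1}w_{S,h} + \nabla\cdot\myR_S\phi$, etc.), and only afterwards replace $\myR_i\phi$ by $\myR_{i,h}\phi_h$ for some projection $\phi_h$ of $\phi$. This breaks the exact identity $b(\bm{v}_h, w_h) = \|w_h\|_W^2$: the total divergence now carries the uncontrolled error $\nabla\cdot(\myR_{i,h}\phi_h - \myR_i\phi)$, and nothing guarantees this perturbation is small relative to $\|w_h\|_W^2$ uniformly in $h$, $\mu$, and $K$. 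Moreover, a generic projection onto $\Lambda_h$ need not preserve the mean-value constraint $(\phi_h,1)_\Gamma = (Kw_{D,h},1)_{\Omega_D}$, without which the pure-Neumann auxiliary problem in $\Omega_D$ is not even solvable. The paper avoids both issues by reversing the order: $\phi_h \in \Lambda_h$ is obtained directly from a \emph{discrete} constrained minimization analogous to \eqref{eq: phi constraint} (so the compatibility \eqref{eq: compatibility of phi_h} holds exactly), and the continuous auxiliary problems are then posed with $\nabla\cdot\myR_{i,h}\phi_h$ already on the right-hand side, so that interpolating the zero-trace parts via \eqref{eq: commutative} reproduces $b(\bm{v}_h,w_h) = \|w_h\|_W^2$ with no error term. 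With that reordering your argument coincides with the paper's.
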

	\begin{proof}
		Inequalities \eqref{ineq: a_cont_h} and \eqref{ineq: b_cont_h} follow using the same arguments as \eqref{ineqs: continuity} in \Cref{lem: inequalities}. Continuing with \eqref{ineq: a_coercive_h}, we note from \eqref{eq: commutative} that $b(\bm{v}_h, w_h) = 0$ implies
		\begin{align*}
			0 = \Pi_{W_D} \nabla \cdot \bm{v}_{D, h} = \nabla \cdot (\Pi_{V_D} \bm{v}_{D, h}) = \nabla \cdot \bm{v}_{D, h}.
		\end{align*}
		Hence, the same derivation as \eqref{eq: proof 3.5c} is followed to give us \eqref{ineq: a_coercive_h}. 

		For the final inequality, we consider $w_h \in W_h$ given and follow the strategy of \Cref{lem: inequalities}. First, we set up a minimization problem in $\Lambda_h$ analogous to \eqref{eq: phi constraint} to obtain a bounded $\phi_h \in \Lambda_h$ such that
		\begin{subequations}
		\begin{align}
			\| \phi_h \|_{1, \Gamma} &\lesssim \| K w_{D, h} \|_{0, \Omega_D}, \\
			(\nabla \cdot \myR_{D, h} \phi_h, 1)_{\Omega_D} &=
			(- \bm{n} \cdot \myR_{D, h} \phi_h, 1)_{\Gamma} = 
			(- \phi_h, 1)_{\Gamma} = 
			- (K w_{D, h}, 1)_{\Omega_D}. \label{eq: compatibility of phi_h}
		\end{align}
		\end{subequations}

		Next, we note that $w_h \in W_h \subset W$. In turn, we use the auxiliary problems \eqref{eqs: aux prob p_S} and \eqref{eqs: aux prob p_D} to construct $\bm{v}_S^0 \in \bm{V}_S^0$ and $\bm{v}_D^0 \in \bm{V}_D^0$ such that
		\begin{subequations}
		\begin{align}
			- \nabla \cdot \bm{v}_S^0 &= \mu^{-1} w_{S, h} + \nabla \cdot \myR_{S, h} \phi_h, \\
			- \nabla \cdot \bm{v}_D^0 &= K w_{D, h} + \nabla \cdot \myR_{D, h} \phi_h, \\
			\| \bm{v}_S^0 \|_{1, \Omega_S} &\lesssim 
			\| \mu^{-1} w_{S, h} \|_{0, \Omega_S}  + \| \nabla \cdot \myR_{S, h} \phi_h \|_{0, \Omega_S}, \\
			\| \bm{v}_D^0 \|_{1, \Omega_D} &\lesssim 
			\| K w_{D, h} \|_{0, \Omega_D}  + \| \nabla \cdot \myR_{D, h} \phi_h \|_{0, \Omega_D}.
		\end{align}
		\end{subequations}

		We then employ the interpolation operators from \eqref{eq: commutative} to create $\bm{v}_{S, h}^0 = \Pi_{V_S} \bm{v}_S^0$ and $\bm{v}_{D, h}^0 = \Pi_{V_D}\bm{v}_D^0$. Using the commutative properties, we obtain
		\begin{align*}
			b(\bm{v}_h, w_h) &
			= - \sum_{i \in \{S, D\}} (\nabla \cdot (\Pi_{V_i} \bm{v}_i^0 + \myR_{i, h} \phi_h), w_{i, h})_{\Omega_i}
			= (\mu^{-1} w_{S, h}, w_{S, h})_{\Omega_S} + (K w_{D, h}, w_{D, h})_{\Omega_D}
			= \| w_h \|_W^2.
		\end{align*}
		Moreover, by the boundedness of these interpolation operators, we have
		\begin{align*}
			\| \bm{v}_h \|_V 
			\le \| \bm{v}_h^0 \|_V + \| \myR_h \phi_h \|_V
			\lesssim \| \bm{v}^0 \|_V + \| \phi_h \|_\Lambda
			\lesssim \| w_h \|_W,
		\end{align*}
		proving the final inequality \eqref{ineq: b_infsup_h}.
	\end{proof}

	\begin{theorem}
		If the three conditions presented at the beginning of this section are satisfied, then the discretization method is stable, i.e. a unique solution $(\bm{u}_h, p_h) \in \bm{V}_h \times W_h$ exists for \eqref{eq: variational formulation_h} satisfying
	\begin{align}
		\| \bm{u}_h \|_V + \| p_h \|_W 
		\lesssim 
		\| \mu^{-\half} \bm{f}_S \|_{-1, \Omega_S}
		+ \| K^{-\half} f_D \|_{0, \Omega_D}
		+ \| K^{\half} g_p \|_{\half, \partial_p \Omega_D}.
	\end{align}
	\end{theorem}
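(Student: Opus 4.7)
The plan is a direct application of standard saddle-point theory \cite{boffi2013mixed} to the discrete system \eqref{eq: variational formulation_h}. The four hypotheses required for well-posedness have already been verified in \Cref{lem: inequalities_h}: continuity of $a$ and $b$, coercivity of $a$ on the discrete divergence-free subspace, and the discrete inf-sup condition for $b$. Crucially, all four constants produced there are independent of both the mesh size $h$ and the material parameters $\mu$ and $K$, so the stability constant returned by Brezzi's theorem will inherit this robustness.

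What remains is to bound the right-hand side functionals $f_u$ and $f_p$ in the dual norms induced by $\|\cdot\|_V$ and $\|\cdot\|_W$. The plan is to repeat, line by line, the three-term estimate carried out at the end of the proof of \Cref{thm: well-posedness}: apply Cauchy--Schwarz to each of $(\bm{f}_S, \bm{v}_{S,h})_{\Omega_S}$, $(f_D, w_{D,h})_{\Omega_D}$, and $(g_p, \bm{n}\cdot\bm{v}_{D,h})_{\partial_p\Omega_D}$, insert the weights $\mu^{\pm\half}$ and $K^{\pm\half}$ on the appropriate sides, and treat the boundary term using $H^{1/2}$--$H^{-1/2}$ duality combined with the normal trace inequality in $H(\div,\Omega_D)$. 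By the inclusions \eqref{eq: inclusions}, one has $\bm{V}_{S,h}\subset\bm{V}_S\subset (H^1(\Omega_S))^n$ and $\bm{V}_{D,h}\subset\bm{V}_D\subset H(\div,\Omega_D)$, so every norm appearing in these bounds is well defined on discrete functions and the requisite trace inequalities apply verbatim. This yields
\begin{align*}
    f_u(\bm{v}_h) + f_p(w_h)
    \lesssim
    \left(\| \mu^{-\half} \bm{f}_S \|_{-1, \Omega_S}
    + \| K^{-\half} f_D \|_{0, \Omega_D}
    + \| K^{\half} g_p \|_{\half, \partial_p \Omega_D}\right)
    \left(\| \bm{v}_h \|_V + \| w_h \|_W\right).
\end{align*}

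Combining this continuity of the right-hand side with the discrete stability of \Cref{lem: inequalities_h} through the standard Brezzi estimate produces the desired inequality and, simultaneously, existence and uniqueness of $(\bm{u}_h,p_h)\in\bm{V}_h\times W_h$. No substantive obstacle is anticipated; the theorem is essentially a corollary of \Cref{lem: inequalities_h} together with the right-hand side argument from \Cref{thm: well-posedness}. The only point deserving explicit mention is that $\bm{V}_h$ is in general not a subspace of $\bm{V}$, owing to the weaker interface matching condition \eqref{eq: extension property h}, so \Cref{thm: well-posedness} cannot simply be invoked by inclusion. This non-conformity is harmless, however, because both the discrete inf-sup/coercivity bounds and the right-hand side continuity are formulated intrinsically on $\bm{V}_h\times W_h$ with constants that depend on neither $h$ nor the material parameters.
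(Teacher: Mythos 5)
Your proposal is correct and follows essentially the same route as the paper, which likewise deduces the theorem from \Cref{lem: inequalities_h}, the right-hand-side continuity argument already given in \Cref{thm: well-posedness}, and standard saddle-point theory. Your additional remark on the non-conformity $\bm{V}_h \not\subset \bm{V}$ and why it is harmless is a valid and worthwhile clarification, but it does not change the substance of the argument.
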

	\begin{proof}
		This result follows from \Cref{lem: inequalities_h}, the continuity of the right-hand side from \Cref{thm: well-posedness}, and saddle point theory.
	\end{proof}

\section{Iterative Solution Method} 
\label{sec:iterative_solvers}

With well-posedness of the discrete system shown in the previous section, we continue by constructing an efficient solution method to solve the coupled system in an iterative manner. The scheme is introduced according to three steps. We first present the discrete Steklov-Poincar\'e system that we aim to solve using a Krylov subspace method. Second, a parameter-robust preconditioner is introduced for the reduced system. The third step combines these two ideas to form an iterative method that respects mass conservation at each iteration.

\subsection{Discrete Steklov-Poincar\'e System} 
\label{sub:discrete_poincar}

Similar to the continuous case in Section~\ref{sec:the_steklov_poincare_system}, we reduce the problem to the interface flux variables $\phi_h \in \Lambda_h$. The reduced system is a direct translation of \eqref{eq: poincare steklov} to the discrete setting: \\
Find $\phi_h \in \Lambda_h$ such that
\begin{align} \label{eq: poincare steklov_h}
	\lang \Sigma_h \phi_h, \varphi_h \rang &= \lang \chi_h, \varphi_h \rang, &
	\forall \varphi_h &\in \Lambda_h.
\end{align}

To ease the implementation of the scheme, we focus particularly on the structure of the operator $\Sigma_h$. For that, we note that the space $\bm{V}_h^0$ can be decomposed orthogonally into $\bm{V}_{S, h}^0 \times \bm{V}_{D, h}^0$ and a similar decomposition holds for $W_h$. The aim is to propose a solution method which exploits this property. For brevity, the subscript $h$ is omitted on all variables and operators, keeping in mind that the remainder of this section concerns the discretized setting.

Let us rewrite the bilinear forms $a$ and $b$ in terms of duality pairings, thereby revealing the matrix structure of the problem. For that, we group the terms according to the subdomains and let the operators $A_i: \bm{V}_{i, h} \to \bm{V}_{i, h}^*$ and $B_i:\bm{V}_{i, h} \to W_{i, h}^*$ be defined for $i \in \{S, D\}$ such that
\begin{align*}
	\lang A_S \bm{u}_S, \bm{v}_S \rang  
	&= (\mu \varepsilon(\bm{u}_S), \varepsilon(\bm{v}_S))_{\Omega_S} 
		+ (\beta \bm{\tau} \cdot \bm{u}_S, \bm{\tau} \cdot \bm{v}_S )_{\Gamma}, \\
	\lang A_D \bm{u}_D, \bm{v}_D \rang  
	&= (K^{-1} \bm{u}_D, \bm{v}_D)_{\Omega_D}, \\
	\lang B_S \bm{u}_S, w_S \rang 
	&= -(\nabla \cdot \bm{u}_S, w_S)_{\Omega_S}, \\
	\lang B_D \bm{u}_D, w_D \rang 
	&= -(\nabla \cdot \bm{u}_D, w_D)_{\Omega_D}.
\end{align*}

Let $A_{i, 0}$ and $B_{i, 0}$ be the respective restrictions of the above to the subspace $\bm{V}_{i, h}^0$.
With these operators and the decomposition $\bm{u}_i = \bm{u}_i + \myR_i \phi$ for the trial and test functions, problem \eqref{eq: variational formulation} attains the following matrix form:
\begin{align} \label{eq: matrix form}
	\begin{bmatrix}
		A_{S, 0}       & B_{S, 0}^T     &                &                & A_S \myR_S \\[5pt]
		B_{S, 0}       &                &                &                & B_S \myR_S \\[5pt]
		               &                & A_{D, 0}       & B_{D, 0}^T     & A_D \myR_D \\[5pt]
		               &                & B_{D, 0}       &                & B_D \myR_D \\[5pt]
		(A_S \myR_S)^T & (B_S \myR_S)^T & (A_D \myR_D)^T & (B_D \myR_D)^T & \sum_i \myR_i^T A_i \myR_i
	\end{bmatrix}
	\begin{bmatrix}
		\bm{u}_S^0 \\[5pt]
		p_S \\[5pt]
		\bm{u}_D^0 \\[5pt]
		p_D \\[5pt]
		\phi
	\end{bmatrix}
	&= 
	\begin{bmatrix}
		f_{S, u} \\[5pt]
		f_{S, p} \\[5pt]
		f_{D, u} \\[5pt]
		f_{D, p} \\[5pt]
		f_{\phi} 
	\end{bmatrix}.
\end{align}
In practice, the discrete extension operators are chosen to only have support in the elements adjacent to $\Gamma$, leading to a favorable sparsity pattern.
We moreover note that the final row corresponds to test functions $\varphi \in \Lambda_h$.
The right-hand side of \eqref{eq: matrix form} is defined such that
\begin{align}
	\lang f_{S, u}, \bm{v}_S^0 \rang 
	+ \lang f_{S, p}, w_S \rang 
	+ \lang f_{D, u}, \bm{v}_D^0 \rang
	+ \lang f_{D, p}, w_D \rang
	+ \lang f_{\phi}, \varphi \rang
	&=
	f_u(\bm{v}) + f_p(w), &
	\forall (\bm{v}, w) &\in \bm{V}_h \times W_h.
\end{align}

The discrete Steklov-Poincar\'e system is obtained by taking a Schur-complement of this system. In particular, we obtain $\Sigma_h$ and the right-hand side $\chi_h$ as
\begin{subequations}
\begin{align} 
	\label{eq: def sigma_h}
	\Sigma_h &:= \sum_{i \in \{S, D\} } 
	\myR_i^T A_i \myR_i
	- \myR_i^T [A_i \ B_i^T] 
	\begin{bmatrix}
	A_{i, 0}   & B_{i, 0}^T \\[5pt]
	B_{i, 0}   & \end{bmatrix}^{-1}
	\begin{bmatrix}
		A_i \\[5pt]
		B_i
	\end{bmatrix}
	\myR_i, \\
	\chi_h &:= f_{\phi}
	- \sum_{i \in \{S, D\} } \myR_i^T [A_i \ B_i^T] 
	\begin{bmatrix}
	A_{i,0}   & B_{i, 0}^T \\[5pt]
	B_{i, 0}  & \end{bmatrix}^{-1}
	\begin{bmatrix}
		f_{u, i} \\[5pt]
		f_{p, i}
	\end{bmatrix}.
	\label{eq: def chi_h}
\end{align}
\end{subequations}

We employ a Krylov subspace method to solve \eqref{eq: poincare steklov_h} iteratively, thereby avoiding the computationally costly assembly of $\Sigma_h$. In order to obtain a parameter-robust iterative method, the next step is to introduce an appropriate preconditioner, as presented in the next section. 

\subsection{Parameter-Robust Preconditioning}
\label{sub:parameter_robust_preconditioning}

In this section, we construct the preconditioner such that the resulting iterative method is robust with respect to the material parameters ($K$ and $\mu$) and the mesh size ($h$). For that, we use the parameter-dependent norm $\| \cdot \|_\Lambda$ from \eqref{eq: norm phi} and follow the framework presented by \cite{mardal2011preconditioning} to form a norm-equivalent preconditioner. In particular, we use the following result from that work:
\begin{lemma}
	Given a bounded, symmetric, positive-definite operator $\Sigma: \Lambda \to \Lambda^*$ 
	and a symmetric positive definite operator $\mathcal{P}: \Lambda^* \to \Lambda$. If the induced norm $\| \phi \|_{\mathcal{P}^{-1}}^2 := \lang \mathcal{P}^{-1} \phi, \phi \rang$ satisfies
	\begin{align} \label{eq: norm equivalence precond}
		\| \phi \|_{\Lambda}^2 \lesssim
		\| \phi \|_{\mathcal{P}^{-1}}^2 \lesssim
		\| \phi \|_{\Lambda}^2,
	\end{align}
	then $\mathcal{P}$ is a robust preconditioner in the sense that the condition number of $\mathcal{P} \Sigma$ is bounded independent of the material and mesh parameters.
\end{lemma}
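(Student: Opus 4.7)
The plan is to translate the condition number of $\mathcal{P}\Sigma$ into a generalized Rayleigh quotient and then apply the two norm equivalences at our disposal: the one coming from Theorem~\ref{thm: sigma SPD} (continuity and coercivity of $\Sigma$ with respect to $\|\cdot\|_\Lambda$) and the hypothesized one \eqref{eq: norm equivalence precond} between $\|\cdot\|_\Lambda$ and $\|\cdot\|_{\mathcal{P}^{-1}}$. The composition $\mathcal{P}\Sigma$ maps $\Lambda$ to $\Lambda$ and is self-adjoint with respect to the inner product $\langle \mathcal{P}^{-1} \cdot, \cdot \rangle$, so it has a real, positive spectrum and its condition number equals the ratio $\lambda_{\max}/\lambda_{\min}$ of the extreme eigenvalues.

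The first step I would carry out is to record the two-sided bound implied by Theorem~\ref{thm: sigma SPD}, namely that there exist constants $c_1, c_2 > 0$ independent of the material and mesh parameters with
\begin{align*}
	c_1 \| \phi \|_\Lambda^2 \;\le\; \langle \Sigma \phi, \phi \rangle \;\le\; c_2 \| \phi \|_\Lambda^2,
	\qquad \forall \phi \in \Lambda.
\end{align*}
Next, I would invoke the Courant--Fischer principle for the generalized eigenvalue problem $\Sigma \phi = \lambda \mathcal{P}^{-1} \phi$ to express
\begin{align*}
	\lambda_{\max}(\mathcal{P} \Sigma) &= \sup_{\phi \ne 0} \frac{\langle \Sigma \phi, \phi \rangle}{\langle \mathcal{P}^{-1} \phi, \phi \rangle},
	&
	\lambda_{\min}(\mathcal{P} \Sigma) &= \inf_{\phi \ne 0} \frac{\langle \Sigma \phi, \phi \rangle}{\langle \mathcal{P}^{-1} \phi, \phi \rangle}.
\end{align*}

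Combining these identities with the bounds on $\Sigma$ and the hypothesis \eqref{eq: norm equivalence precond}, each Rayleigh quotient is pinched between two positive constants depending only on $c_1$, $c_2$ and the equivalence constants in \eqref{eq: norm equivalence precond}. Taking the ratio gives a bound on $\kappa(\mathcal{P} \Sigma) = \lambda_{\max}/\lambda_{\min}$ that is independent of the material and mesh parameters, which is the claim. I do not anticipate a genuine obstacle: the result is essentially a repackaging of the Mardal--Winther framework cited in the statement, and every ingredient (self-adjointness of $\mathcal{P} \Sigma$ in the $\mathcal{P}^{-1}$-inner product, Courant--Fischer, and the two norm equivalences) is either standard or already established in the paper. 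The only point that deserves a careful sentence is the justification that $\mathcal{P} \Sigma$ is diagonalizable with real positive spectrum, which follows from $\mathcal{P}^{-1}$ being an SPD inner product together with $\Sigma$ being SPD.
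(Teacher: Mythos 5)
Your argument is correct, and it is the standard justification of this lemma: the paper itself does not prove it but imports it directly from the operator-preconditioning framework of Mardal and Winther \cite{mardal2011preconditioning}, so there is no in-paper proof to diverge from. Your Rayleigh-quotient reasoning is exactly the argument behind that citation; the only point worth making explicit is that the hypotheses ``bounded'' and ``positive-definite'' must be read as the two-sided, parameter-independent bounds $\| \phi \|_{\Lambda}^2 \lesssim \lang \Sigma \phi, \phi \rang \lesssim \| \phi \|_{\Lambda}^2$, which is precisely what \Cref{thm: sigma SPD} supplies and what your first step records.
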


Note that symmetry of $\Sigma_h$ is apparent from \eqref{eq: def sigma_h}. Positive definiteness follows using the same arguments as in \Cref{thm: sigma SPD}. The next step is therefore to create an operator $\mathcal{P}^{-1}$ that generates a norm which is equivalent to $\| \cdot \|_\Lambda$ on $\Lambda_h$. Recall from \eqref{eq: norm phi} that $\| \cdot \|_\Lambda$ is composed of fractional Sobolev norms. The key idea is to introduce a matrix $\mathsf{H}(s)$ that induces a norm which is equivalent to $H^s(\Gamma)$ for $s = \pm \half$. We apply the strategy explained in \cite{kuchta2016preconditioners} to achieve this, of which a short description follows.

For given basis $\{\phi_i\}_{i = 1}^{n_\Lambda} \in \Lambda_h$ with $n_\Lambda$ the dimension of $\Lambda_h$, let the mass matrix $\mathsf{M}$ and stiffness matrix $\mathsf{A}$ be defined such that
\begin{align}
	\mathsf{M}_{ij} &:= ( \phi_j, \phi_i )_{\Gamma}, & 
	\mathsf{A}_{ij} &:= ( \nabla_\Gamma \phi_j, \nabla_\Gamma \phi_i )_{\Gamma}.
\end{align}

Then, a complete set of eigenvectors $\mathsf{v}_i \in \mathbb{R}^{n_\Lambda}$ and eigenvalues $\lambda_i \in \mathbb{R}$ exist solving the generalized eigenvalue problem 
\begin{align} \label{eq: generalized eigenvalue problem}
	\mathsf{A} \mathsf{v}_i = \lambda_i \mathsf{M} \mathsf{v}_i.
\end{align}
The eigenvectors satisfy $\mathsf{v}_i^\mathsf{T} \mathsf{M} \mathsf{v}_j = \delta_{ij}$ with $\delta_{ij}$ the Kronecker delta function. Let the diagonal matrix $\mathsf{\Lambda} := \operatorname{diag}([\lambda_i]_{i = 1}^{n_\Lambda})$ and let $\mathsf{V}$ be the matrix with $\mathsf{v}_i$ as its columns. The following eigendecomposition then holds:
\begin{align}
	\mathsf{A} = \mathsf{(MV) \Lambda (MV)^T}.
\end{align}

Using the matrices $\mathsf{M}$, $\mathsf{V}$, and $\mathsf{\Lambda}$, we define the operator $\mathsf{H}: \mathbb{R} \to \mathbb{R}^{n_\Lambda \times n_\Lambda}$ as
\begin{align}
	\mathsf{H}(s) = \mathsf{(MV) \Lambda}^s \mathsf{(MV)^T}.
\end{align}
An advantage of this construction is that its inverse can be directly computed as $\mathsf{H}(s)^{-1} = \mathsf{V \Lambda}^{-s} \mathsf{V^T}$ due to $\mathsf{V^TMV = I}$. Next, we emphasize that $\mathsf{H}(0) = \mathsf{M}$ and $\mathsf{H}(1) = \mathsf{A}$, i.e. the discrete $L^2(\Gamma)$ and $H^1(\Gamma)$ norms on $\Lambda_h$ are generated for $s = 0$ and $s = 1$, respectively. As a generalization, the norm induced by the matrix $\mathsf{H}(s)$ is equivalent to the $H^s(\Gamma)$ norm on the discrete space $\Lambda_h$ \cite{kuchta2016preconditioners}. In other words, 
\begin{align}
	\| \phi \|_{s, \Gamma}^2
	\lesssim (\pi \phi)^T \mathsf{H}(s) (\pi \phi)
	\lesssim \| \phi \|_{s, \Gamma}^2,
\end{align}
in which $\pi$ is the representation operator in the basis $\{\phi_i\}_{i = 1}^{n_\Lambda}$.

Next, we use these tools to define our preconditioner following the strategy of \cite{mardal2011preconditioning}. The operator $\mathsf{P}^{-1}: \mathbb{R}^{n_{\Lambda}} \to \mathbb{R}^{n_{\Lambda}}$ is defined according to the norm $\| \cdot \|_{\Lambda}$ from \eqref{eq: norm phi}:
\begin{align}
	\mathsf{P}^{-1} := \mu \mathsf{H} \left(\tfrac{1}{2}\right) + K^{-1} \mathsf{H} \left(-\tfrac{1}{2}\right).
\end{align}

Defining $\mathcal{P}^{-1} := \pi^T \mathsf{P}^{-1} \pi$, we obtain the equivalence relation \eqref{eq: norm equivalence precond} by construction. In turn, the inverse operator $\mathcal{P}$ is an optimal preconditioner for the system $\eqref{eq: poincare steklov_h}$. The matrix $\mathsf{P}$ is explicitly computed using the properties of $\mathsf{V}$ and $\mathsf{M}$:
\begin{align} \label{eq: preconditioner}
	\mathsf{P} =  
	\left(
	\mu \mathsf{H} \left(\tfrac{1}{2}\right) + K^{-1} \mathsf{H} \left(-\tfrac{1}{2}\right)
	\right)^{-1}
	=
	\mathsf{V} \left(
	\mu \mathsf{\Lambda}^{\half} + K^{-1} \mathsf{\Lambda}^{-\half}
	\right)^{-1}
	\mathsf{V^T}.
\end{align}

\subsection{An Iterative Method Respecting Mass Conservation} 
\label{sub:a_conservative_method}

The Steklov-Poincar\'e system from \cref{sub:discrete_poincar} and the preconditioner from \cref{sub:parameter_robust_preconditioning} form the main ingredients of the iterative scheme proposed next. As mentioned before, we aim to use a Krylov subspace method on the reduced system \eqref{eq: poincare steklov_h}. We turn to the Generalized Minimal Residual (GMRes) method \cite{saad1986gmres} and propose the scheme we refer to as \Cref{alg: GMRes}, described below.

\begin{algorithm}[ht]
	\caption{}
	\label{alg: GMRes}
	\begin{enumerate}
		\item Set the tolerance $\epsilon > 0$, choose an initial guess $\phi_h^0 \in \Lambda_h$, and construct the right-hand side $\chi_h$ from \eqref{eq: def chi_h} and $\mathsf{P}$ from \eqref{eq: preconditioner}.
		\item \label{step: SD solves}
		Using $\mathsf{P}$ as a preconditioner, apply GMRes to the discrete Steklov-Poincar\'e system \eqref{eq: poincare steklov_h} until the relative, preconditioned residual is smaller than $\epsilon$. This involves solving a Stokes and a Darcy subproblem in \eqref{eq: def sigma_h} at each iteration.
		\item \label{step: reconstruction}
		Construct $(\bm{u}_{S, h}, p_{S, h}) \in \bm{V}_{S, h} \times W_{S, h}$ and $(\bm{u}_{D, h}, p_{D, h}) \in \bm{V}_{D, h} \times W_{D, h}$ by solving the independent Stokes and Darcy subproblems with $\phi_h$ as the normal flux on $\Gamma$.
		\item In the case of Neumann problems, reconstruct the mean of the pressure in $\Omega_D$ using \eqref{eq: def p bar single} or \eqref{eq: def p bar pure}.
	\end{enumerate}
\end{algorithm}

We make three observations concerning this algorithm. Most importantly, the solution $(\bm{u}_h, p_h)$ produced by \Cref{alg: GMRes} conserves mass locally, independent of the number of GMRes iterations. In particular, 
the definition of the space $\bm{V}_h$ ensures that no mass is lost across the interface. Moreover, with the flux $\phi_h$ given on the interface, the reconstruction in step \eqref{step: reconstruction} ensures that mass is conserved in each subdomain.

Secondly, we emphasize that the solves for the Stokes and Darcy subproblems in step \ref{step: SD solves} can be performed in parallel by optimized solvers. Moreover, the preconditioner is local to the interface and is agnostic to the choice of extension operators and chosen discretization methods. In turn, this scheme is applicable to a wide range of well-established ``legacy'' codes tailored to solving Stokes and Darcy flow problems. 

Third, we have made the implicit assumption that obtaining $\mathsf{P}$ by solving \eqref{eq: generalized eigenvalue problem} is computationally feasible. This is typically the case if the dimension of the interface space $\Lambda_h$ is sufficiently small. The generalized eigenvalue problem is solved once in an a priori, or ``off-line'', stage and the assembled matrix $\mathsf{P}$ is then applied in each iteration of the GMRes method.

\section{Numerical Results} 
\label{sec:numerical_results}

In this section, we present numerical experiments that verify the theoretical results presented above. By setting up artificial coupled Stokes-Darcy problems in two dimension, we investigate the dependency of \Cref{alg: GMRes} on physical and discretization parameters in Section~\ref{sub:parameter_robustness}. Afterward, Section~\ref{sub:comparison_to_NN_method} presents a comparison of the proposed scheme to a Neumann-Neumann method.

\subsection{Parameter Robustness}
\label{sub:parameter_robustness}

Let the subdomains be given by $\Omega_S := (0, 1) \times (0, 1)$, $\Omega_D := (0, 1) \times (-1, 0)$, and $\Gamma := [0, 1] \times \{ 0 \}$. Two test cases are considered, defined by different boundary conditions. The first concerns the setting in which both the Stokes and Darcy subproblems are well-posed. On the other hand, test case 2 illustrates the scenario in which a pure Neumann problem is imposed on the porous medium. 

For test case 1, let $\partial_\sigma \Omega_S$ be the top boundary ($x_2 = 1$) and $\partial_u \Omega_D$ the bottom boundary ($x_2 = -1$). The remaining portions of the boundary $\partial \Omega$ form $\partial_u \Omega_S$ and $\partial_p \Omega_D$. On $\partial_u \Omega_S$, zero velocity is imposed as described by \eqref{eq: BC essential}. The pressure data is set to $g_p(x_1, x_2) := x_2$ on $\partial_p \Omega_D$ to stimulate a flow field that infiltrates the porous medium.

Test case 2 simulates parallel flow over a porous medium. We impose no-flux conditions on $\partial \Omega_D \setminus \Gamma$, thereby ensuring that all mass transfer to and from the porous medium occurs at the interface $\Gamma$. The flow is stimulated by prescribing the velocity at the left and right boundaries of $\Omega_S$ using the parabolic profile $\bm{u}_S(x_1, x_2) := [0, x_2 (2 - x_2)]$. As in test case 1, the top boundary represents $\partial_\sigma \Omega_S$, at which a zero stress is prescribed.

Both test cases consider zero body force and mass source, i.e. $\bm{f}_S := 0$ and $f_D := 0$. Moreover, we set the parameter $\alpha$ in the Beavers-Joseph-Saffman condition to zero for simplicity. 

The meshes $\Omega_{S, h}$ and $\Omega_{D, h}$ are chosen to be matching at $\Gamma$ and we set $\Gamma_h$ as the coinciding trace mesh. Following \Cref{sec:discretization}, we choose the Mixed Finite Element method in both subdomains, implemented with the use of FEniCS \cite{logg2012automated}. The spaces are given by a vector of quadratic Lagrange elements $(\bm{P}_2)$ for the Stokes velocity $\bm{V}_{S, h}$ and lowest order Raviart-Thomas elements $(RT_0)$ for the Darcy velocity $\bm{V}_{D, h}$. The pressure space $W_h$ is given by piecewise constants $(P_0)$. The interface space $\Lambda_h$ is chosen to be the normal trace of $\bm{V}_{S, h}$ on $\Gamma_h$, and therefore consists of quadratic Lagrange elements.

For the sake of efficiency, the matrices $\mathsf{V}$ and $\mathsf{\Lambda}$ used in the preconditioner $\mathsf{P}$ are computed a priori. Moreover, we pre-compute the $\mathsf{LU}$-decompositions of the Darcy and Stokes subproblems. These decompositions serve as surrogates for optimized ``legacy'' codes. The iterative solver is terminated when a relative residual of $\epsilon = 10^{-6}$ is reached, i.e. when the ratio of Euclidean norms of the preconditioned residual and the preconditioned right-hand side is smaller than $\epsilon$. 

We first consider the dependency of the iterative solver on the mesh size. We set unit viscosity and permeability and start with a coarse mesh with $h = 1/8$. The mesh is refined four times and at each refinement, the number of iterations in \Cref{alg: GMRes} is reported. The results for both test cases are shown in \Cref{tab: Robustness}. 

The results show that the number of iterations is robust with respect to the mesh size. Moreover, the second and third columns indicate the reduction from a fully coupled problem of size $n_{total}$ to a significantly smaller interface problem of size $n_\Lambda$. As shown in \Cref{sec:the_steklov_poincare_system}, this interface problem is symmetric and positive definite.

\begin{table}[ht]
	\caption{The number of iterations necessary to reach the given tolerance with respect to the mesh size. The material parameters are given by $\kappa = \mu = 1$. }
	\label{tab: Robustness}
	\centering
	\begin{tabular}{|r|r|r|c|c|}
		\hline

		\hline
			$1 / h$ &
			$n_{total}$ &
			$n_\Lambda$ &
			Case 1 &
			Case 2 \\
		\hline
			  8 &   1,042 &  15 & 8 & 6 \\
			 16 &   4,002 &  31 & 9 & 8 \\
			 32 &  15,682 &  63 & 8 & 9 \\
			 64 &  62,082 & 127 & 8 & 9 \\
			128 & 247,042 & 255 & 8 & 8 \\
		\hline

		\hline
	\end{tabular}
\end{table}

We investigate the robustness of \Cref{alg: GMRes} with respect to physical parameters by varying both the (scalar) permeability $\kappa$ and the viscosity $\mu$ over a range of eight orders of magnitude. The number of iterations is reported in Table~\ref{tab: Robustness parameters}. 

It is apparent that the scheme is robust with respect to both parameters, reaching the desired tolerance within a maximum of eleven iterations for the two test cases. Minor deviations in the iteration numbers can be observed for low permeabilities. The scheme may require an extra iteration in that case due to the higher sensitivity of the Darcy subproblem on flux boundary data. 

\begin{table}[ht]
	\caption{Number of iterations necessary to reach the tolerance with respect to the physical parameters. For both cases, the mesh size is set to $h = 1/64$.}
	\label{tab: Robustness parameters}
	\centering
	\begin{tabular}{|r|r|rrrrr|}
		\hline

		\hline
		\multicolumn{2}{|c|}{\multirow{2}{*}{Case 1}}
		& 
		\multicolumn{5}{c|}{log$_{10}(\mu)$} \\
		\cline{3-7}
		\multicolumn{2}{|c|}{} 
		& $-4$ & $-2$ & $\phantom{-}0$ & $\phantom{-}2$ & $\phantom{-}4$ \\
		\hline
			\multirow{5}{*}{log$_{10}(\kappa)$}
			&  4 & 8 & 8 & 8 & 8 & 8 \\
			&  2 & 8 & 8 & 8 & 8 & 8 \\
			&  0 & 8 & 8 & 8 & 8 & 8 \\
			& $-2$ & 7 & 7 & 7 & 7 & 7 \\
			& $-4$ & 7 & 7 & 7 & 7 & 7 \\
		\hline

		\hline
	\end{tabular}
	\hspace{50 pt}
	\begin{tabular}{|r|r|rrrrr|}
		\hline

		\hline
		\multicolumn{2}{|c|}{\multirow{2}{*}{Case 2}}
		& 
		\multicolumn{5}{c|}{log$_{10}(\mu)$} \\
		\cline{3-7}
		\multicolumn{2}{|c|}{} 
		& $-4$ & $-2$ & $\phantom{-}0$ & $\phantom{-}2$ & $\phantom{-}4$ \\
		\hline
			\multirow{5}{*}{log$_{10}(\kappa)$}
			&  4 &  9 &  9 &  9 &  9 &  9 \\
			&  2 &  9 &  9 &  9 &  9 &  9 \\
			&  0 &  9 &  9 &  9 &  9 &  9 \\
			& $-2$ &  9 &  9 &  9 &  9 &  9 \\
			& $-4$ & 11 & 11 & 11 & 11 & 10 \\
		\hline
 
		\hline
	\end{tabular}
\end{table}

\subsection{Comparison to a Neumann-Neumann Method} 
\label{sub:comparison_to_NN_method}

In order to compare the performance of Algorithm~\ref{alg: GMRes} to more conventional domain decomposition methods, we consider the closely-related Neumann-Neumann method. This method, as remarked in \cite[Remark 3.1]{discacciati2007robin}, solves the Steklov-Poincar\'e sytem \eqref{eq: poincare steklov_h} in the following, iterative manner. Given the current residual, we solve the Stokes and Darcy subproblems by interpreting this residual as a normal stress, respectively pressure, boundary condition. The computed fluxes normal to $\Gamma$ then update $\phi$ through the following operator:
\begin{align}
	\mathcal{P}_{NN} := \Sigma_{S, h}^{-1} + \Sigma_{D, h}^{-1},
\end{align}
with $\Sigma_{S, h} + \Sigma_{D, h} = \Sigma_h$ the decomposition in \eqref{eq: def sigma_h}. 

Noting that $\mathcal{P}_{NN}$ is an approximation of $\Sigma_h^{-1}$, we define the Neumann-Neumann method by replacing the preconditioner $\mathsf{P}$ by $\mathcal{P}_{NN}$ in Algorithm~\ref{alg: GMRes}. Moreover, we choose the same Krylov subspace method (GMRes) and stopping criterion, in order to make the comparison as fair as possible. 

We consider the numerical experiment from \cite{discacciati2005iterative,discacciati2007robin} posed on $\Omega := (0, 1) \times (0, 2)$ with $\Omega_S := (0, 1) \times (1, 2)$, $\Omega_D := (0, 1) \times (0, 1)$, and $\Gamma = (0, 1) \times \{ 1 \}$.
The solution is given by $\bm{u}_S := \left[(x_2 - 1)^2, \  x_1(x_1 - 1)\right]$, $p_S = \mu (x_1 + x_2 - 1) + (3K)^{-1}$, and $p_D = \left(x_1(1 - x_1)(x_2 - 1) + \frac13 x_2^3 - x_2^2 + x_2 \right) K^{-1} + \mu x_1$. The boundary conditions are chosen to comply with this solution and we impose the pressure on $\partial \Omega_D \setminus \Gamma$, the normal stress on the top boundary, and the velocity on the remaining boundaries of $\Omega_S$. Finally, the Beavers-Joseph-Saffman condition is replaced by a no-slip condition for the tangential Stokes velocity on $\Gamma$.

Using the same Mixed Finite Element discretization as in the previous section, we vary the material and discretization parameters and report the number of iterations necessary to reach a relative residual of $\epsilon = 10^{-6}$ to the preconditioned problem. The results are presented in Table~\ref{tab: comparison with NN}.

\begin{table}[ht]
	
	\caption{Iteration counts of the proposed scheme compared to a Neumann-Neumann method. The initial mesh has $h_0 = 1/7$ and each refinement is such that $h_{i + 1} = h_i/2$.}
	\label{tab: comparison with NN}
	\centering
	\begin{tabular}{|c|c|rrrrr|rrrrr|}
		\hline

		\hline
		\multirow{2}{*}{log$_{10}(\mu)$} & 
		\multirow{2}{*}{log$_{10}(K)$} & 
		\multicolumn{5}{c|}{Neumann-Neumann} &
		\multicolumn{5}{c|}{Algorithm~\ref{alg: GMRes}} \\
		\cline{3-12}
		& &
		$h_0$ & $h_1$ & $h_2$ & $h_3$ & $h_4$ &
		$h_0$ & $h_1$ & $h_2$ & $h_3$ & $h_4$ \\
		\hline			
		\multirow{3}{*}{$\phantom{-}0$}
		& $\phantom{-}0$ & 3 & 3 & 3 & 3 & 3 & 8 & 8 & 8 & 8 & 8 \\
		& $-1$ & 5 & 5 & 5 & 5 & 5 & 8 & 8 & 8 & 8 & 8 \\
		& $-2$ & 7 & 7 & 8 & 8 & 8 & 7 & 7 & 7 & 8 & 8 \\
		\hline 
		\multirow{3}{*}{$-1$}
		& $\phantom{-}0$ & 5 & 5 & 5 & 5 & 5 & 8 & 8 & 8 & 8 & 8 \\
		& $-1$ & 7 & 7 & 8 & 8 & 8 & 7 & 7 & 7 & 8 & 8 \\
		& $-2$ & 8 & 9 & 12 & 16 & 13 & 10 & 9 & 8 & 7 & 7 \\
		\hline 
		\multirow{3}{*}{$-2$}
		& $\phantom{-}0$ & 7 & 7 & 8 & 8 & 8 & 7 & 7 & 7 & 8 & 8 \\
		& $-1$ & 8 & 9 & 12 & 16 & 13 & 10 & 9 & 8 & 7 & 7 \\
		& $-2$ & 14 & 9 & 18 & 25 & 29 & 13 & 14 & 12 & 8 & 7 \\
		\hline

		\hline
	\end{tabular}

\end{table}

We observe that the two methods behave opposite as the mesh size decreases. Whereas the Neumann-Neumann method requires more iterations for finer grids, the performance of our proposed scheme appears to improve, requiring only eight iterations in all cases on the finest levels.

In general, Algorithm~\ref{alg: GMRes} outperforms the Neumann-Neumann method in terms of robustness, with the only deviation occurring in the case of a low permeability and a coarse grid. The Neumann-Neumann method appears more sensitive to both material and discretization parameters and only converges faster for material parameters close to unity.

In terms of computational cost, we emphasize that our proposed scheme requires an off-line computation to construct the preconditioner and contains a solve for the Stokes and Darcy subproblems at each iteration. On the other hand, the Neumann-Neumann method requires an additional solve for the subproblems in the preconditioner $\mathcal{P}_{NN}$. These additional solves will likely become prohibitively expensive for finer grids, since each solve is more costly and more iterations become necessary. Thus, if the preconditioner $\mathsf{P}$ can be formed efficiently, then Algorithm~\ref{alg: GMRes} forms an attractive alternative for such problems.

Although these results do not allow for a thorough, quantitative comparison with the Robin-Robin methods presented in \cite{discacciati2007robin}, we do make an important, qualitative observation. In particular, our proposed method does not require setting any acceleration parameters and its performance is a direct consequence of the constructed preconditioner. This is advantageous because finding the optimal values for such parameters can be a non-trivial task.

\section{Conclusion}
\label{sec:conclusions}

In this work, we proposed an iterative method for solving coupled Stokes-Darcy problems that retains local mass conservation at each iteration. By introducing the normal flux at the interface, the original problem is reduced to a smaller problem concerning only this variable. Through a priori analysis with the use of weighted norms, a preconditioner is formed to ensure that the scheme is robust with respect to physical and discretization parameters.

Future research will focus on four main ideas. First, we are interesting in investigating the application of this scheme on different discretization methods, including the pairing of a MPFA finite volume method with the MAC-scheme as in \cite{schneider2020coupling}. 

Secondly, we note that the use of non-matching grids forms another natural extension. In that case, we aim to investigate how such a mismatch affects the discretization error. However, such analysis heavily depends on the chosen discretization method and is therefore reserved as a topic for future investigation.

Third, the generalization of these ideas to non-linear problems forms another area of our interest. By considering the Navier-Stokes equations in the free-flow subdomain, for example, the reduction to an interface problem will inherit the non-linearity. An iterative method that solves this reduced problem may benefit from a similarly constructed preconditioner. 

Finally, as remarked in Section~\ref{sub:a_conservative_method}, we are under the assumption that the generalized eigenvalue problem \eqref{eq: generalized eigenvalue problem} can be solved efficiently. However, if the assembly of $\mathsf{P}$ is too costly, then more efficient, spectrally equivalent preconditioners are required. A promising example may be to employ the recent work on multi-grid preconditioners for fractional diffusion problems \cite{baerland2019multigrid}.

To conclude, we have presented this iterative method in a basic setting so that it may form a foundation for a variety of research topics that we aim to pursue in future work.

\begin{acknowledgement}
	The author expresses his gratitude to Prof. Rainer Helmig, Prof. Ivan Yotov, Dennis Gl\"aser, and Kilian Weishaupt for valuable discussions on closely related topics.
\end{acknowledgement}

\bibliographystyle{siam}
\bibliography{biblio}

\begin{thebibliography}{10}

\bibitem{arbogast2000mixed}
{\sc T.~Arbogast, L.~C. Cowsar, M.~F. Wheeler, and I.~Yotov}, {\em Mixed finite
  element methods on nonmatching multiblock grids}, SIAM Journal on Numerical
  Analysis, 37 (2000), pp.~1295--1315.

\bibitem{armentano2019unified}
{\sc M.~G. Armentano and M.~L. Stockdale}, {\em A unified mixed finite element
  approximations of the {S}tokes--{D}arcy coupled problem}, Computers \&
  Mathematics with Applications, 77 (2019), pp.~2568--2584.

\bibitem{baerland2019multigrid}
{\sc T.~B{\ae}rland, M.~Kuchta, and K.-A. Mardal}, {\em Multigrid methods for
  discrete fractional {S}obolev spaces}, SIAM Journal on Scientific Computing,
  41 (2019), pp.~A948--A972.

\bibitem{bernardi1985analysis}
{\sc C.~Bernardi and G.~Raugel}, {\em Analysis of some finite elements for the
  {S}tokes problem}, Mathematics of Computation,  (1985), pp.~71--79.

\bibitem{boffi2013mixed}
{\sc D.~Boffi, F.~Brezzi, and M.~Fortin}, {\em Mixed finite element methods and
  applications}, vol.~44, Springer, 2013.

\bibitem{boon2018robust}
{\sc W.~M. Boon, J.~M. Nordbotten, and I.~Yotov}, {\em Robust discretization of
  flow in fractured porous media}, SIAM Journal on Numerical Analysis, 56
  (2018), pp.~2203--2233.

\bibitem{brezzi1985two}
{\sc F.~Brezzi, J.~Douglas, and L.~D. Marini}, {\em Two families of mixed
  finite elements for second order elliptic problems}, Numerische Mathematik,
  47 (1985), pp.~217--235.

\bibitem{cao2011robin}
{\sc Y.~Cao, M.~Gunzburger, X.~He, and X.~Wang}, {\em {Robin}--{Robin} domain
  decomposition methods for the steady-state {S}tokes--{D}arcy system with the
  {B}eavers--{J}oseph interface condition}, Numerische Mathematik, 117 (2011),
  pp.~601--629.

\bibitem{discacciati2004domain}
{\sc M.~Discacciati}, {\em Domain decomposition methods for the coupling of
  surface and groundwater flows}, PhD thesis, 2004.

\bibitem{discacciati2005iterative}
\leavevmode\vrule height 2pt depth -1.6pt width 23pt, {\em Iterative methods
  for {S}tokes/{D}arcy coupling}, in Domain decomposition methods in science
  and engineering, Springer, 2005, pp.~563--570.

\bibitem{discacciati2018optimized}
{\sc M.~Discacciati and L.~Gerardo-Giorda}, {\em Optimized schwarz methods for
  the {S}tokes--{D}arcy coupling}, IMA Journal of Numerical Analysis, 38
  (2018), pp.~1959--1983.

\bibitem{discacciati2009navier}
{\sc M.~Discacciati and A.~Quarteroni}, {\em Navier-{S}tokes/{D}arcy coupling:
  Modeling, analysis, and numerical approximation}, Revista Matem{\'a}tica
  Complutense, 22 (2009), pp.~315--426.

\bibitem{discacciati2007robin}
{\sc M.~Discacciati, A.~Quarteroni, and A.~Valli}, {\em Robin--robin domain
  decomposition methods for the {S}tokes--{D}arcy coupling}, SIAM Journal on
  Numerical Analysis, 45 (2007), pp.~1246--1268.

\bibitem{evans2010partial}
{\sc L.~Evans}, {\em Partial Differential Equations}, Graduate studies in
  mathematics, American Mathematical Society, 2010.

\bibitem{fetzer2017a}
{\sc T.~Fetzer, C.~Gr{\"u}ninger, B.~Flemisch, and R.~Helmig}, {\em On the
  conditions for coupling free flow and porous-medium flow in a finite volume
  framework}, in International Conference on Finite Volumes for Complex
  Applications, Springer, 2017, pp.~347--356.

\bibitem{galvis2007balancing}
{\sc J.~Galvis and M.~Sarkis}, {\em Balancing domain decomposition methods for
  mortar coupling {S}tokes-{D}arcy systems}, in Domain decomposition methods in
  science and engineering XVI, Springer, 2007, pp.~373--380.

\bibitem{ganderderivation}
{\sc M.~J. Gander and T.~Vanzan}, {\em On the derivation of optimized
  transmission conditions for the {S}tokes-{D}arcy coupling}, Domain
  Decomposition Methods in Science and Engineering XXV,  (2019).

\bibitem{gatica2011analysis}
{\sc G.~Gatica, R.~Oyarz{\'u}a, and F.-J. Sayas}, {\em Analysis of fully-mixed
  finite element methods for the {S}tokes-{D}arcy coupled problem}, Mathematics
  of Computation, 80 (2011), pp.~1911--1948.

\bibitem{gatica2009conforming}
{\sc G.~N. Gatica, S.~Meddahi, and R.~Oyarz{\'u}a}, {\em A conforming mixed
  finite-element method for the coupling of fluid flow with porous media flow},
  IMA Journal of Numerical Analysis, 29 (2009), pp.~86--108.

\bibitem{iliev2004a}
{\sc O.~Iliev and V.~Laptev}, {\em On numerical simulation of flow through oil
  filters}, Computing and Visualization in Science, 6 (2004), pp.~139--146.

\bibitem{karper2009unified}
{\sc T.~Karper, K.-A. Mardal, and R.~Winther}, {\em Unified finite element
  discretizations of coupled {D}arcy--{S}tokes flow}, Numerical Methods for
  Partial Differential Equations, 25 (2009), pp.~311--326.

\bibitem{kuchta2016preconditioners}
{\sc M.~Kuchta, M.~Nordaas, J.~C. Verschaeve, M.~Mortensen, and K.-A. Mardal},
  {\em Preconditioners for saddle point systems with trace constraints coupling
  2{D} and 1{D} domains}, SIAM Journal on Scientific Computing, 38 (2016),
  pp.~B962--B987.

\bibitem{layton2002coupling}
{\sc W.~J. Layton, F.~Schieweck, and I.~Yotov}, {\em Coupling fluid flow with
  porous media flow}, SIAM Journal on Numerical Analysis, 40 (2002),
  pp.~2195--2218.

\bibitem{lions2012non}
{\sc J.~L. Lions and E.~Magenes}, {\em Non-homogeneous boundary value problems
  and applications}, vol.~1, Springer Science \& Business Media, 2012.

\bibitem{logg2012automated}
{\sc A.~Logg, K.-A. Mardal, and G.~Wells}, {\em Automated solution of
  differential equations by the finite element method: The FEniCS book},
  vol.~84, Springer Science \& Business Media, 2012.

\bibitem{mardal2011preconditioning}
{\sc K.-A. Mardal and R.~Winther}, {\em Preconditioning discretizations of
  systems of partial differential equations}, Numerical Linear Algebra with
  Applications, 18 (2011), pp.~1--40.

\bibitem{masson2016a}
{\sc R.~Masson, L.~Trenty, and Y.~Zhang}, {\em Coupling compositional liquid
  gas {D}arcy and free gas flows at porous and free-flow domains interface},
  Journal of Computational Physics, 321 (2016), pp.~708--728.

\bibitem{mosthaf2011a}
{\sc K.~Mosthaf, K.~Baber, B.~Flemisch, R.~Helmig, A.~Leijnse, I.~Rybak, and
  B.~Wohlmuth}, {\em A coupling concept for two-phase compositional
  porous-medium and single-phase compositional free flow}, Water Resour. Res,
  47 (2011).

\bibitem{nordbotten2019unified}
{\sc J.~M. Nordbotten, W.~M. Boon, A.~Fumagalli, and E.~Keilegavlen}, {\em
  Unified approach to discretization of flow in fractured porous media},
  Computational Geosciences, 23 (2019), pp.~225--237.

\bibitem{quarteroni1999domain}
{\sc A.~Quarteroni, A.~Valli, and P.~Valli}, {\em Domain Decomposition Methods
  for Partial Differential Equations}, Numerical Mathematics and Scie,
  Clarendon Press, 1999.

\bibitem{raviart1977mixed}
{\sc P.-A. Raviart and J.-M. Thomas}, {\em A mixed finite element method for
  2-nd order elliptic problems}, in Mathematical aspects of finite element
  methods, Springer, 1977, pp.~292--315.

\bibitem{riviere2005locally}
{\sc B.~Rivi{\`e}re and I.~Yotov}, {\em Locally conservative coupling of
  {S}tokes and {D}arcy flows}, SIAM Journal on Numerical Analysis, 42 (2005),
  pp.~1959--1977.

\bibitem{rybak2016mathematical}
{\sc I.~Rybak}, {\em Mathematical modeling of coupled free flow and porous
  medium systems}, {H}abilitation, University of Stuttgart, 2016.

\bibitem{rybak2015a}
{\sc I.~Rybak, J.~Magiera, R.~Helmig, and C.~Rohde}, {\em Multirate time
  integration for coupled saturated/unsaturated porous medium and free flow
  systems}, Computational Geosciences, 19 (2015), pp.~299--309.

\bibitem{saad1986gmres}
{\sc Y.~Saad and M.~H. Schultz}, {\em Gmres: A generalized minimal residual
  algorithm for solving nonsymmetric linear systems}, SIAM Journal on
  scientific and statistical computing, 7 (1986), pp.~856--869.

\bibitem{schneider2020coupling}
{\sc M.~Schneider, K.~Weishaupt, D.~Gl{\"a}ser, W.~M. Boon, and R.~Helmig},
  {\em Coupling staggered-grid and {MPFA} finite volume methods for free
  flow/porous-medium flow problems}, Journal of Computational Physics, 401
  (2020), p.~109012.

\end{thebibliography}

\end{document}